\theoremstyle{plain}
\newtheorem{thm}{Theorem}[section]
\newtheorem{corollary}[thm]{Corollary}
\newtheorem{prop}[thm]{Proposition}
\theoremstyle{definition}
\newtheorem{df}[thm]{Definition}
\newtheorem{rmk}[thm]{Remark}
\newtheorem{exa}[thm]{Example}
\newcommand{\roundb}[1]{\! \left(\! \left( #1 \right)\!\right)}
\newcommand{\curlyb}[1]{\! \left\{\! \left\{ #1 \right\}\!\right\}}
\newcommand{\squarebs}[1]{\! \left[ #1 \right]}
\newcommand{\OO}{\mathcal{O}}
\newcommand{\pp}{\mathfrak{p}}
\newcommand{\spec}{\mathrm{Spec}\:}
\newcommand{\Ad}{\mathbb{A}}
\newcommand{\ZZ}{\mathbb{Z}}
\renewcommand{\hom}{\mathrm{Hom}}
\newcommand{\Top}{\mathbf{Top}}
\newcommand{\Seq}{\mathbf{Seq}}
\newcommand{\Sets}{\mathbf{Sets}}
\newcommand{\Alg}{\mathbf{Alg}}
\newcommand{\Sch}{\mathbf{Sch}}
\newcommand{\AffSch}{\mathbf{AffSch}}
\newcommand{\car}{\mathrm{char}\:}
\author{Alberto C\'amara\footnote{The author is supported by a Doctoral Training Grant at the University of Nottingham.}}
\title{Topology on rational points over higher local fields} 
\date{\today}
\begin{document}

\maketitle

\begin{abstract}
  We  develop a sequential-topological study of rational points of schemes of finite type over local rings typical in higher dimensional number theory and algebraic geometry. These rings are certain types of multidimensional complete fields and their rings of integers and include higher local fields. Our results extend the constructions of Weil over (one-dimensional) local fields. We establish the existence of an appropriate topology on the set of rational points of schemes of finite type over any of the rings considered, study the functoriality of this construction and deduce several properties. 
\end{abstract}

\section{Introduction}

The study of higher dimensional local fields was started in the 1970s by Parshin in positive characteristic and Kato in the general case. They are natural objects that arise in algebra and geometry by a process that iterates localization and completion. Let us recall the definition.

\begin{df}
  \label{df:hlfs}
  A zero dimensional local field is a finite field. For $n \geq 1$, an $n$-dimensional local field is a complete discrete valuation field, such that the residue field for the valuation is an $(n-1)$-dimensional local field.
\end{df}

In positive characteristic, the theory is also developed allowing zero dimensional local fields to be any perfect field of positive characteristic (cf. \textsection \ref{sec:otherhlfs}).

In the geometric context, these fields generalize the role played by local fields on curves over finite fields or, more generally, on global fields. The point of view is that instead of focusing on a point and a neighbourhood, we consider a maximal chain of subschemes ordered by inclusion.

The development of techniques to study the arithmetic of higher local fields, generalizing what is known for (one-dimensional) local fields, would have applications to the study of the arithmetic properties of higher dimensional schemes, including N\'eron models of elliptic curves. 

There are mainly four different approaches to the study of higher local fields: through topology; analysis and measure theory (see the works of Fesenko \cite{fesenko-aoas1}, Morrow \cite{morrow-integration-on-valuation-fields}, and Kim-Lee \cite{kim-lee-spherical-hecke-algebras}); category theory (see for example works by Beilinson \cite{beilinson-perverse-sheaves}, Drinfeld \cite{drinfeld-infinite-dimensional-vector-bundles}, Kapranov \cite{kapranov-double-affine-hecke-algebras} and Previdi \cite{previdi-locally-compact-objects-exact-cats}) and model theory(see for example works by Hrushovski-Kazhdan \cite{hrushovski-kazhdan-integration-in-valued-fields}). Finding relations among these points of view and unifying them is quite a challenging problem.

Regarding the topological approach, with which this work is concerned, the major achievement is local higher class field theory. This theory was established in the general case by Kato \cite{kato-lCFT-1}, \cite{kato-lCFT-2}, \cite{kato-lCFT-3}; Fesenko \cite{fesenko-cft-of-multidim-local-fields-char-zero}, \cite{fesenko-multidimensional-lt-of-cf}; Spiess \cite{spiess-class-formations-hlcft} and others. It uses Milnor $K$-groups endowed with a certain topology and the higher reciprocity map in order to describe abelian extensions of a higher local field. For a collection of surveys on higher local class field theory, see \cite{ihlf}.

We should remark that the valuation topology on a higher local field and the usual topology on its units do not suffice for this purpose, and the introduction of more sophisticated topologies is required. Working  simultaneously  with  several  topologies helps to study deep arithmetic properties in higher number theory. Of course, there are many other  similar examples, e.g. the use of six topologies in the work on the Bloch-Kato conjecture by Voevodsky. 

Such topologies are what we call {\it higher topologies}. When taken into account, multiplication on a higher local field of dimension greater than 1 ceases to be continuous, and hence we are no longer working in a category of topological fields. However, the fact that multiplication is sequentially continuous is enough to show many interesting properties of the arithmetic of these fields: Fesenko \cite{fesenko-abelianlocalpclass} seems to have been the first one to exploit the sequential properties of higher topologies in the study of higher class field theory.

One of the conclusions of this work is the utility of the sequential point of view on higher topology when it comes to endowing sets of rational points over higher local fields with a topology.

\vskip .5cm

This work is organised as follows. In \textsection \ref{sec:sectopgpsandrings}, we review basic facts about sequential topology and introduce a new concept: a sequential ring (Definition \ref{df:seqring}). \textsection \ref{sec:examples} provides the main examples of sequential rings. It consists of a small survey on higher topologies on higher local fields. Although these sections are mainly expository, several well-known facts about higher topology are stated and proved in \textsection \ref{sec:examples}; despite being known to experts in the area, it is very difficult to refer to a proof. In particular, (v) in Proposition \ref{prop:hdtop} and Theorem \ref{thm:norelationlinearseq} are new; we have not found these results in the existing literature.

In \textsection \ref{sec:ratpoints}, schemes of finite type over a sequential ring are considered. Under some conditions on the ring, we construct a topology on the set of rational points, and we study the properties of this construction. The key result in this direction is Theorem \ref{thm:topratptsgeneral}.

The main available examples of sequential rings are complete discrete valuation fields such that their residue fields are endowed with a topology that has certain properties. For such fields, we may apply a general construction that produces a topology which turns the field into a sequential ring.

Following the construction given by Theorem \ref{thm:topratptsgeneral}, we obtain a topology on the set of rational points on a scheme of finite type over a higher local field and also on schemes over the ring of integers of a higher local field. Several properties of this topology are deduced from this general setting. 

Finally, we discuss further work and connections to other topics in \textsection\ref{sec:future}. In particular, the possibility of applications of the results in this work to the study of higher adelic rational points of algebraic varieties and to the study of representation theory of algebraic groups over higher local fields ought to be emphasized.

\paragraph{Notation.} By the word \textit{ring} we will always mean a commutative, unital ring.

Throughout this text, if $F$ denotes a complete discrete valuation field, $\OO_F$ will denote the ring of integers for the unique normalized discrete valuation on $F$ and $\pp_F \subset \OO_F$ will denote the unique maximal ideal of $\OO_F$. By $\pi_F$ we will typically denote an element of $F$ such that $\pp_F = \pi_F \OO_F$.

We will denote $\overline{F} = \OO_F / \pp_F$ for the residue field of $F$ and $\rho_F: \OO_F \rightarrow \overline{F}$ will denote the residue homomorphism. If $F$ is a higher local field of dimension $n$ as in Definition \ref{df:hlfs}, the rank-$n$ ring of integers of $F$, defined in \textsection\ref{sec:examples}, will be denoted by $O_F$.

In sections \textsection\ref{sec:sectopgpsandrings} and \textsection \ref{sec:ratpoints}, $R$ and $S$ will denote sequential rings as in Definition \ref{df:seqring}.

In section \textsection \ref{sec:ratpoints}, $X$ and $Y$ will denote schemes, which we will always consider of finite type, over a sequential ring.

Throughout the text, $\Sets$ stands for the category of sets. When $R$ is a ring, $\Alg_R$ and $\Sch_R$ stand for the categories of finitely generated $R$-algebras and schemes of finite type over $R$, respectively. $\AffSch_R$ is the subcategory of $\Sch_R$ whose objects are affine schemes of finite type over $R$. Finally, $\Top$ is the category of topological spaces and $\Seq$ is the subcategory of sequential topological spaces.

When dealing with the elements of a sequence, the expression \textit{almost all} may be safely replaced by \textit{all but finitely many}.

\paragraph{Acknowledgement.} I am grateful to my advisor I. B. Fesenko for suggesting this direction of work, for many instructive conversations and for his useful guidance. I am also very grateful to M. T. Morrow and O. Br\"{a}unling, who during countless conversations have given invaluable advice and suggestions, have read early versions of this work and have pointed out many improvements.

\section{Sequential topology on groups and rings}
\label{sec:sectopgpsandrings}

We review a few aspects about the category of sequential topological spaces. Details and proofs for the statements in this section may be found in \cite{franklin-sequences1}, \cite{franklin-sequences2}.

Let $(X, \tau)$ be a topological space.

\begin{df}
  A sequence $(x_n)_n \subset X$ is convergent to $x \in X$ if for every open neighbourhood $U$ of $x$ in $X$, almost all of the elements in the sequence $(x_n)_n$ belong to $U$. That is: there exists $n_0 \in \mathbb{N}$ such that $x_n \in U$ for $n \geq n_0$.
\end{df}

We will use the notation $x_n \rightarrow x$ whenever $(x_n)_n \subset X$ is a sequence which converges to $x \in X$.

In general, specifying the set of convergent sequences of a set $X$ does not determine a unique topology on $X$, but rather a whole family of topologies. Among them, there is one which is maximal in the sense that it is the finest: the sequential saturation.

\begin{df}
  A subset $A$ of $X$ is sequentially open if for any sequence $(x_n)_n$ in $X$ convergent to $x \in A$, there is an index $n_0 \in \mathbb{N}$ such that $x_n \in A$ for $n \geq n_0$.
\end{df}

\begin{prop}
  Every open set is sequentially open. The collection of sequentially open sets of $X$ defines a topology $\tau_s$ on $X$, finer than $\tau$. \qed
\end{prop}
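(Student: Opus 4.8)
The plan is to establish the three assertions of the Proposition in turn, following the standard development of sequential spaces as in Franklin's papers. First I would show that every $\tau$-open set $U$ is sequentially open: if $(x_n)_n$ converges to $x \in U$, then by the very definition of convergence, since $U$ is an open neighbourhood of $x$, almost all $x_n$ lie in $U$, which is exactly the defining condition for $U$ to be sequentially open. This direction is immediate and contains no real difficulty.

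Next I would verify that the collection $\tau_s$ of sequentially open sets is a topology on $X$. One checks the axioms directly. The empty set and $X$ are trivially sequentially open. For a finite intersection $A_1 \cap \cdots \cap A_k$: given $x_n \rightarrow x \in A_1 \cap \cdots \cap A_k$, for each $j$ there is $n_j$ with $x_n \in A_j$ for $n \geq n_j$; taking $n_0 = \max_j n_j$ works. For an arbitrary union $\bigcup_{i \in I} A_i$: given $x_n \rightarrow x \in \bigcup_i A_i$, pick $i_0$ with $x \in A_{i_0}$; then almost all $x_n$ lie in $A_{i_0} \subseteq \bigcup_i A_i$. Finally, $\tau_s$ is finer than $\tau$ precisely because every $\tau$-open set is sequentially open, i.e. $\tau \subseteq \tau_s$, which is the content of the first assertion.

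There is essentially no hard step here; the statement is foundational and the only point requiring the slightest care is keeping straight the two quantifier structures (the "for every neighbourhood" in convergence versus the "for every convergent sequence" in sequential openness) so that the finite-intersection and arbitrary-union arguments are applied on the correct side. I would simply assemble these observations into a short proof and refer the reader to \cite{franklin-sequences1}, \cite{franklin-sequences2} for further details, as the excerpt already indicates.
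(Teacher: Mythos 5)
Your proof is correct and is exactly the standard argument; the paper itself omits the proof (stating the proposition with \qed and deferring to \cite{franklin-sequences1}, \cite{franklin-sequences2}), so there is no divergence to report. All three steps --- open implies sequentially open via the definition of convergence, the topology axioms via the max-of-indices and pick-one-set arguments, and fineness as a restatement of the first assertion --- are sound.
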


The natural map $(X, \tau_s) \to (X, \tau)$, given by the identity on the set $X$, is continuous. 

\begin{df}
  The space $(X,\tau_s)$ is called the sequential saturation of $(X,\tau)$. Whenever $\tau = \tau_s$, we shall simply say that $X$ is sequential.
\end{df}

\begin{rmk}
  A subset $C \subseteq X$ is sequentially closed if for every $x_n \to x$ in $X$, $x_n \in C$ for every $n$ implies $x \in C$. This condition is equivalent to $X \setminus C$ being sequentially open. We could have defined $\tau_{s}$ by specifying that its closed sets are all sequentially closed sets in $(X, \tau)$.
\end{rmk}

\begin{prop}
  Let $f: X \rightarrow Y$ be a map between topological spaces. The following are equivalent:
  \begin{enumerate}
    \item For any convergent sequence $x_n \rightarrow x$ in $X$, the sequence $(f(x_n))_n$ converges to $f(x)$ in $Y$.
    \item The preimage under $f$ of any sequentially open set in $Y$ is sequentially open.
    \item The map $f: (X,\tau_s) \rightarrow Y$ is continuous.
  \end{enumerate}
  When this situation holds, we say that $f$ is a sequentially continuous map. \qed
\end{prop}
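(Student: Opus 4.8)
The plan is to establish the cycle of implications $(1) \Rightarrow (2) \Rightarrow (3) \Rightarrow (1)$, relying throughout on the characterisation of $\tau_s$ by its (sequentially) open sets and on one auxiliary observation: a sequence $(x_n)_n$ converges to $x$ in $(X,\tau)$ if and only if it converges to $x$ in $(X,\tau_s)$. The nontrivial direction of this observation is that $\tau$-convergence implies $\tau_s$-convergence, which is immediate from the definition of a sequentially open set: if $x_n \to x$ in $\tau$ and $U$ is a sequentially open neighbourhood of $x$, then by definition almost all $x_n$ lie in $U$. The reverse direction is trivial since $\tau_s$ is finer than $\tau$. I will also use the elementary fact that an ordinary continuous map preserves sequential convergence: if $g$ is continuous, $x_n \to x$, and $V$ is an open neighbourhood of $g(x)$, then $g^{-1}(V)$ is an open neighbourhood of $x$, so almost all $x_n$ lie in it and hence almost all $g(x_n)$ lie in $V$.

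For $(1) \Rightarrow (2)$, I would take a sequentially open set $V \subseteq Y$ and a sequence $x_n \to x$ in $X$ with $x \in f^{-1}(V)$. By $(1)$ the sequence $f(x_n)$ converges to $f(x) \in V$, and since $V$ is sequentially open there is $n_0$ with $f(x_n) \in V$ for all $n \geq n_0$; equivalently $x_n \in f^{-1}(V)$ for $n \geq n_0$. Hence $f^{-1}(V)$ is sequentially open. For $(2) \Rightarrow (3)$, recall that the open sets of $(X,\tau_s)$ are exactly the sequentially open subsets of $X$. Given an open set $V$ of $Y$, it is in particular sequentially open (by the Proposition preceding this one), so $(2)$ gives that $f^{-1}(V)$ is sequentially open in $X$, i.e.\ open in $\tau_s$; thus $f \colon (X,\tau_s) \to Y$ is continuous.

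For $(3) \Rightarrow (1)$, let $x_n \to x$ in $X$, i.e.\ in $(X,\tau)$. By the auxiliary observation above, $x_n \to x$ also in $(X,\tau_s)$. Since $f \colon (X,\tau_s) \to Y$ is continuous by $(3)$, and continuous maps preserve sequential convergence, $f(x_n) \to f(x)$ in $Y$, which is $(1)$.

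I do not expect a serious obstacle here: the argument is a routine unwinding of the definitions of $\tau_s$ and of sequential (openness and) continuity. The only point that deserves care — and which is the conceptual heart of the statement — is the coincidence of convergent sequences in $\tau$ and in $\tau_s$; once that is isolated and recorded, the three implications are each a two-line verification.
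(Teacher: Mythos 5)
Your proof is correct and complete: the cycle $(1)\Rightarrow(2)\Rightarrow(3)\Rightarrow(1)$ is a clean unwinding of the definitions, and you correctly isolate and justify the one non-obvious ingredient, namely that $\tau$ and $\tau_s$ have the same convergent sequences (which follows directly from the definition of a sequentially open set). The paper itself states this proposition without proof, referring to Franklin's papers on sequential spaces, so there is no in-text argument to compare against; your write-up is the standard one and could stand in for the omitted proof as is.
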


Any continuous map is sequentially continuous. Hence, sequential continuity is a weaker condition than ordinary continuity. In a sequential space, the topology is essentially controlled by sequences.

\begin{exa}
  Any metric space is a sequential space. More generally, any first countable space is sequential \cite[\textsection 1]{franklin-sequences1}.
\end{exa}

\begin{exa}
  The Stone-\v{C}ech compactification of a topological space $X$ is the unique Hausdorff compact space $\beta X$ provided with a continuous map $X \to \beta X$ such that for any Hausdorff compact space $Y$ and any continuous map $f: X \to Y$ there is a unique map $\beta f: \beta X \to Y$ such that the diagram
  \begin{equation*}
    \xymatrix{
    X \ar[r]^f \ar[d] & Y \\
    \beta X \ar[ru]_{\beta f}
    }
  \end{equation*}
  commutes. The Stone-\v{C}ech compactification of the natural numbers, $\beta \mathbb{N}$, is an example of a space which is not sequential \cite[Example 1.1]{goreham-sequential-convergence-in-top-spaces}. Despite being compact, $\beta \mathbb{N}$ is not sequentially compact. In this space, the closure of a set does not consist only of the limits of all sequences in that set, and a function from $\beta \mathbb{N}$ to another topological space may be sequentially continuous but not continuous.
\end{exa}

Let $\Seq$ denote the subcategory of sequential topological spaces. Taking the sequential saturation of a topological space defines a functor $\Top \rightarrow \Seq$.

Some of the usual constructions in $\Top$ are not inherited by $\Seq$. The product of topological spaces is a remarkable example of such failure. Other examples of operations which do not behave well with respect to sequential saturation are function spaces and subspaces \cite[Example 1.8]{franklin-sequences1}. However, open and closed subspaces and open and closed images are closed constructions in $\Seq$ \cite[Prop. 1.9]{franklin-sequences1}.

These sort of problems may be addressed by performing the usual construction in $\Top$, and then taking the sequential saturation of the resulting space. In this fashion, the sequential saturation of the product topology provides a product object in $\Seq$. In the words of Steenrod \cite{steenrod-convenient-category-topological-spaces}, $\Seq$ is a {\it convenient} category of topological spaces.

\vskip .5cm

We are interested in the compatibility between sequential topology and algebraic structures.

\begin{df}
  A sequential group is a group $G$ provided with a topology, such that multiplication $G \times G \rightarrow G$ and inversion $G \rightarrow G$ are sequentially continuous ($G \times G$ is provided with the product topology). A homomorphism of sequential groups is a sequentially continuous group homomorphism.
  In other words: if $(G,\tau)$ is a sequential group, then $(G,\tau_s)$ is a group object in $\Seq$.
\end{df}

\begin{rmk}
  $(G, \tau_s)$ is also a group object in $\Top$. If $(G,\tau)$ is a topological group, then so is $(G,\tau_s)$.
\end{rmk}

When considering rings, we could be interested in topologies for which subtraction and multiplication are sequentially continuous. However, we will deal with rings and topologies on them such that their additive groups are topological groups. 

\begin{df}
  \label{df:seqring}
  A sequential ring is a commutative ring $R$ provided with a topology and such that:
  \begin{enumerate}
    \item $(R,+)$ is a topological group.
    \item Multiplication $R\times R \rightarrow R$ is sequentially continuous.
  \end{enumerate}
  A homomorphism of sequential rings is a continuous ring homomorphism.
\end{df}

\begin{rmk}
  We could have required homomorphisms of sequential rings to be sequentially continuous maps. However, there is a good reason to prefer this stronger condition: in this way we preserve the underlying topological structure of the additive groups. Still, a continuous homomorphism of sequential rings $R \to S$ does not furnish $S$ with the structure of a topological $R$-module. This is why we consider the notion of sequential module.
\end{rmk}

\begin{df}
  Let $M$ be an $R$-module. We say that $M$ is a sequential module if $M$ is provided with a topology such that $(M,+)$ is a topological group and multiplication 
  \begin{equation*}
    R \times M \rightarrow M
  \end{equation*}
  is sequentially continuous.
  A homomorphism of sequential $R$-modules is a continuous $R$-module homomorphism.
\end{df}
Note that with this definition $R$ is a sequential $R$-module. If $R \rightarrow S$ is a homomorphism of sequential rings, $S$ is automatically endowed with the structure of a sequential $R$-module.

\vskip .5cm

When $R$ is a topological ring, it is always possible to provide the units with a group topology. Because we do not demand inversion to be continuous, the correct way to topologize $R^\times$ is by considering the initial topology for the map
\begin{equation}
  \label{eqn:topringembedintotwocopies}
  R^\times \rightarrow R \times R, \quad x \mapsto \left( x, x^{-1} \right).
\end{equation}

The situation when $R$ is a sequential ring is not very different, since we do not demand inversion on $R^\times$ to be sequentially continuous.  The topology on $R^\times$ given by the sequential saturation of the initial topology for the map (\ref{eqn:topringembedintotwocopies}) turns $R^\times$ into a topological group.

\begin{rmk}
  A priori, we do not require sequential groups, rings and modules to be sequential topological spaces.
\end{rmk}

\section{Examples of sequential groups and rings}
\label{sec:examples}

Of course, any topological group (resp. ring) is an example of a sequential group (resp. ring). We will consider other objects provided with a sequential structure which is not a topological one in the usual sense.

\subsection{Higher local fields}
\label{subsec:HLFS}

We have introduced higher local fields in Definition \ref{df:hlfs}. An alternative to this definition: a higher local field is a sequence of fields $F = F_n, F_{n-1},\dots, F_0$ where $F_0$ is finite and for $1 \leq i \leq n$, $F_i$ is a complete discrete valuation field with residue field $F_{i-1}$. See \cite[Definition 2.1.1]{yekutieli-explicit-construction-grothendieck-residue-complex} for yet another equivalent definition.

\begin{exa}
  \label{exa:equi2dlf}
  The fields $\mathbb{F}_q \roundb{t}\roundb{u}$ and $\mathbb{Q}_p \roundb{t}$ are both examples of higher local fields of dimension two.
\end{exa}
 
The example below is slightly more difficult, but very important nonetheless.

\begin{exa}
  Consider the field
  \begin{equation*}
    \mathbb{Q}_p \curlyb{t} = \left\{  \sum_{i=-\infty}^{\infty} a_i t^i, a_i \in \mathbb{Q}_p, \inf_{i}v_p(a_i) > -\infty, \lim_{i\rightarrow -\infty} a_i = 0 \right\},
  \end{equation*}
  with operations given by the usual addition and multiplication of power series. Note that we need to use convergence of series in $\mathbb{Q}_p$ in order to define the product. With the discrete valuation given by
  \begin{equation*}
    v\left( \sum_{i=-\infty}^{\infty} a_i t^i \right) := \inf_i v_p(a_i),
  \end{equation*}
  the field $\mathbb{Q}_p \curlyb{t}$ turns into a 2-dimensional local field, its first residue field being $\mathbb{F}_p \roundb{t}$. This field may also be obtained as the completion of $\mathbb{Q}_p \squarebs{t}$ with respect to the gaussian valuation.
\end{exa}

For an introduction to the topic of higher local fields and a collection of surveys on their structure and the structure of their extensions, see \cite{ihlf}. For the details of the topological issues we will discuss in this section, see \cite{madunts-zhukov-topology-hlfs}.

\vskip .5cm

Besides the ring of integers $\OO_F$ for the unique normalized discrete valuation of $F$, we might consider other valuation rings in $F$ which are of arithmetic interest. Fix a system of parameters $t_1, \ldots, t_n$ of $F$. That is, $t_n$ is a uniformizer of $F$, $t_{n-1}$ is a unit of $\OO_F$ such that its image in $\overline{F}$ is a uniformizer, and so on.

We may use our chosen system of parameters to define a valuation of rank $n$:
\begin{equation*}
  \mathbf{v} = (v_1, \ldots, v_n): F^\times \rightarrow \ZZ^n,
\end{equation*}
where $\ZZ^n$ is ordered with the inverse of the lexicographical order. The procedure we follow is: $v_n = v_F$, $v_{n-1}(\alpha) = v_{\overline{F}}(\alpha t_n^{-v_n(\alpha)})$, and so on.

Although $\mathbf{v}$ does depend on the choice of the system of parameters, the valuation ring
\begin{equation*}
  O_F = \left\{ \alpha \in F;\; \mathbf{v}(\alpha) \geq 0 \right\}
\end{equation*}
does not. The ring $O_F$, which is a local ring with maximal ideal $\left\{ \alpha;\;\mathbf{v}(\alpha) > 0 \right\}$, is called the rank-$n$ ring of integers of $F$. It is a subring of $\OO_F$.

Once a system of parameters is chosen, we may consider a valuation of rank $r$ for every $1 \leq r \leq n$ by mimicking the same procedure and stopping after $r$ steps. The valuation rings we obtain are independent of any choice of parameters and are ordered by inclusion:
\begin{equation}
  \label{eqn:higherranksubrings}
  O_F = O_1 \subset O_2 \subset \cdots \subset O_n = \OO_F.
\end{equation}

\vskip .5cm

A higher local field has a topology determined by its unique normalized discrete valuation. This topology turns $F$ into a Hausdorff and complete topological group which is not locally compact if $n >1$, due to the fact that the residue field is not finite. There are several reasons to consider a different topology, such as the fact that the formal series in fields such as the ones in Example \ref{exa:equi2dlf} do not converge in the valuation topology. 

A higher topology takes into account the topology of the residue field and has some good properties, but in general it is not a ring topology. While the additive group is always a topological group with respect to this topology, multiplication is only sequentially continuous. It is for this reason that sequential topology plays an important role in the theory of higher local fields.

Let $E$ be a field provided with a topology such that addition and multiplication by a fixed element are continuous. Consider the field of Laurent power series $E \roundb{t}$. We define a topology as follows.

Let $\left\{ U_i \right\}_{i \in \mathbb{Z}}$ be a sequence of open neighbourhoods of zero of $E$, with the property that there is an $i_0 \in \mathbb{Z}$ such that $U_i = E$ for $i \geq i_0$. Define
\begin{equation}
  \label{eqn:definebasicopenhighertop}
  \mathcal{U} = \left\{ \sum_{i \gg -\infty} f_i t^i\in E\roundb{t};\; f_i \in U_i \right\}.
\end{equation}

\begin{prop}
  \label{prop:liftoftop}
  Let $E$ be a field endowed with a topology such that addition and multiplication by a fixed element are continuous. The collection of sets defined by (\ref{eqn:definebasicopenhighertop}) is a basis of open neighbourhoods of zero for a group topology on $E\roundb{t}$. 
\end{prop}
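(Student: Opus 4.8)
The plan is to verify that the collection $\mathcal{B}$ of sets of the form $\mathcal{U}$ in (\ref{eqn:definebasicopenhighertop}) satisfies the standard neighbourhood-basis axioms for a group topology on the abelian group $(E\roundb{t}, +)$. Concretely, I would check: (a) every $\mathcal{U} \in \mathcal{B}$ contains $0$; (b) the intersection of two members of $\mathcal{B}$ contains a member of $\mathcal{B}$; (c) for each $\mathcal{U} \in \mathcal{B}$ there is $\mathcal{V} \in \mathcal{B}$ with $\mathcal{V} - \mathcal{V} \subseteq \mathcal{U}$ (which, since each $U_i$ is a neighbourhood of $0$ in the topological group $(E,+)$, reduces to the corresponding property coefficient-by-coefficient); and (d) for each $\mathcal{U} \in \mathcal{B}$ and each $f \in \mathcal{U}$ there is $\mathcal{W} \in \mathcal{B}$ with $f + \mathcal{W} \subseteq \mathcal{U}$. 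Granting these, the general theory of topological groups (e.g. Bourbaki) produces a unique group topology on $E\roundb{t}$ for which $\mathcal{B}$ is a basis of open neighbourhoods of zero, and translates give neighbourhood bases at every point.

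For (a), $0 = \sum 0 \cdot t^i$ and $0 \in U_i$ for all $i$, so $0 \in \mathcal{U}$. For (b), given $\mathcal{U}$ associated to $\{U_i\}$ (with $U_i = E$ for $i \geq i_0$) and $\mathcal{U}'$ associated to $\{U_i'\}$ (with $U_i' = E$ for $i \geq i_0'$), set $W_i = U_i \cap U_i'$; then $W_i$ is open in $E$ containing $0$, and $W_i = E$ for $i \geq \max(i_0, i_0')$, so the associated set $\mathcal{W}$ lies in $\mathcal{B}$ and clearly $\mathcal{W} \subseteq \mathcal{U} \cap \mathcal{U}'$ — in fact here we even get equality of the natural sets, but containment is all that is needed. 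For (c), since $(E,+)$ is a topological group, for each $i < i_0$ pick an open neighbourhood $V_i$ of $0$ in $E$ with $V_i - V_i \subseteq U_i$, and for $i \geq i_0$ put $V_i = E$; the set $\mathcal{V}$ built from $\{V_i\}$ belongs to $\mathcal{B}$, and if $g = \sum g_i t^i, h = \sum h_i t^i \in \mathcal{V}$ then $g - h = \sum (g_i - h_i) t^i$ with $g_i - h_i \in V_i - V_i \subseteq U_i$, so $g - h \in \mathcal{U}$; note the support condition $i \gg -\infty$ is preserved under subtraction.

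The remaining point (d) — local homogeneity of the basis — is the one meriting a little care, and I expect it to be the main (mild) obstacle, because it is exactly the place where one must use that each $U_i$ is \emph{open} in $E$ and not merely a neighbourhood of $0$, and where the indexing of the cutoff $i_0$ interacts with the support of $f$. Given $\mathcal{U}$ from $\{U_i\}$ with $U_i = E$ for $i \geq i_0$, and $f = \sum f_i t^i \in \mathcal{U}$, so $f_i \in U_i$ for all $i$: since $U_i$ is open in $(E,+)$, for each $i$ there is an open neighbourhood $W_i$ of $0$ with $f_i + W_i \subseteq U_i$, and for $i \geq i_0$ we may take $W_i = E$ (as $U_i = E$). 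Then $\mathcal{W} \in \mathcal{B}$, and for $g = \sum g_i t^i \in \mathcal{W}$ one has $f + g = \sum (f_i + g_i) t^i$ with $f_i + g_i \in f_i + W_i \subseteq U_i$; the support of $f+g$ is still bounded below because those of $f$ and $g$ are, so $f + g \in \mathcal{U}$. Assembling (a)–(d), the Bourbaki criterion applies and yields the asserted group topology; I would close by remarking that only the hypotheses ``addition continuous'' and ``multiplication by a fixed element continuous'' on $E$ were used (indeed, only the additive topological-group structure of $E$ enters this particular statement — the multiplicative hypothesis is there for later use when one upgrades to sequential continuity of multiplication on $E\roundb{t}$).
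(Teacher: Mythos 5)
Your verification is correct, but note that the paper itself does not prove Proposition \ref{prop:liftoftop}: it is quoted as a known construction, with the proof deferred to Madunts--Zhukov \cite[\S 1.1--\S 1.4]{madunts-zhukov-topology-hlfs}. So there is no in-paper argument to compare against; what you have written is precisely the standard check one would find in that reference, namely that the family $\mathcal{B}$ of sets (\ref{eqn:definebasicopenhighertop}) is a filter base at $0$ satisfying the Bourbaki axioms for a basis of \emph{open} neighbourhoods of the identity of an abelian topological group. Your points (a)--(c) are routine, and you correctly isolate (d) as the step where openness of each $U_i$ (rather than mere neighbourhood-of-zero status) is used, and where one must observe that only finitely many indices $i$ carry a nontrivial constraint thanks to the cutoff $U_i = E$ for $i \geq i_0$ together with the bounded-below support. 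One small correction to your closing remark: the hypothesis that multiplication by a fixed element of $E$ is continuous is not purely ``for later use'' --- it is exactly what makes negation $x \mapsto -x$ continuous on $E$, hence what makes $(E,+)$ a topological group, which you invoke in steps (c) and (d). With that caveat the argument is complete.
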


A similar procedure can be applied to higher local fields. If $\car F = \car \overline{F}$, then an isomorphism $F \simeq \overline{F}\roundb{t}$ is available. The mixed characteristic case, in which $\car F \neq \car \overline{F}$, is more complicated. 

Assume first that $F$ is a standard mixed-characteristic higher local field, i.e; that there is a local field $K$ and an isomorphism $F = K \curlyb{t_1} \cdots \curlyb{t_{n-1}}$ (note that for this field we may choose $\pi_F = \pi_K$), and assume that the higher topology has already been constructed for $F' = K \curlyb{t_1}\cdots\curlyb{t_{n-2}}$, which is an $(n-1)$-dimensional local field.

The choice of local parameters $t_1 = \pi_F, t_2, \ldots, t_n$ determines a {\it canonical} lifting \footnote{a set-theoretic section of the residue homomorphism with some special properties, see \cite[Lemma 1.2]{madunts-zhukov-topology-hlfs} for the definition. Indeed, obtaining this {\it canonical} lifting is the most difficult step in the construction of a higher topology}
\begin{equation*}
  h: \overline{F} \rightarrow \OO_F.
\end{equation*}

Assume that the topology on $\overline{F}$ has already been constructed. Let $\left\{ U_i \right\}_{i \in \mathbb{Z}}$ be a sequence of open neighbourhoods of zero in $\overline{F}$, and assume that there is an index $i_0 \in \mathbb{Z}$ such that $U_i = \overline{F}$ for all $i \geq i_0$. Define
\begin{equation}
  \label{eqn:definebasicopenhighertopmixed}
  \mathcal{U} = \left\{ \sum_{i \gg -\infty} h(f_i) \pi_F^i \in F;\; f_i \in U_i, \text{ for all } i \gg -\infty \right\}.
\end{equation}
The sets of the form (\ref{eqn:definebasicopenhighertopmixed}) define the basis of open neighbourhoods of zero for a group topology on $F$.

In the general case in which $F$ is a nonstandard mixed-characteristic local field, we may find a finite subextension $M \subset F$ which is a standard mixed-characteristic local field, apply the previous construction to $M$ and provide $F$ with the product topology through the linear isomorphism $F \simeq M^{\left[ F:M \right]}$.

\vskip .5cm

By iterating the above constructions and using classification results for higher local fields and choices of parameters we may construct a higher topology on a general $n$-dimensional local field. Details (and proofs) may be found in \cite[\textsection 1.1 -- \textsection 1.4]{madunts-zhukov-topology-hlfs}.

\begin{df}
  Any topology constructed on an $n$-dimensional local field using the above construction is called a higher topology.
\end{df}

\begin{rmk}
  In general, a higher topology on $F$ depends on the choice of a system of parameters. However, we should remark that whenever $\car \overline{F} = p$, the topology depends neither on the choice of a system of parameters or a standard subfield $M \subset F$ in the mixed-characteristic case \cite[Theorem 1.3]{madunts-zhukov-topology-hlfs}. In the case $\car F = \car \overline{F} = 0$, the topology does not depend on the choice of a parameter, but it depends on the choice of a coefficient field \cite[\textsection 1.4]{madunts-zhukov-topology-hlfs}.

  When an isomorphism such as $F \simeq \overline{F}\roundb{t}$ or $F \simeq F'\curlyb{t}$ has been fixed, we will drop the indefinite article and refer to {\it the} higher topology.
\end{rmk}

We summarize the properties of higher topologies below.

\begin{prop}
  \label{prop:hdtop}
  Let $F$ be an $n$-dimensional local field. Then any higher topology on $F$ satisfies the following properties:
  \begin{enumerate}
    \item $(F,+)$ is a topological group which is complete and Hausdorff.
    \item If $n > 1$, every base of neighbourhoods of zero is uncountable.
    \item If $n > 1$, multiplication $F \times F \rightarrow F$ is not continuous. However, both multiplication $F \times F \to F$ and inversion $F^\times \to F^\times$ are sequentially continuous.
    \item Multiplication by a fixed nonzero element $F \rightarrow F$ is a homeomorphism.
    \item The residue homomorphism $\rho: \OO_F \rightarrow \overline{F}$ is open.
  \end{enumerate}
\end{prop}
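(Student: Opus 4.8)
The plan is to prove the proposition by induction on the dimension $n$. The case $n=1$ is classical: the higher topology is then the valuation topology on a complete discrete valuation field with finite residue field, so $F$ is a locally compact topological field and all five assertions hold, with (ii) and (iii) vacuous. For $n>1$ I would fix, according to $\car F$, either an isomorphism $F\simeq\overline{F}\roundb{t}$ (equal characteristic, from a choice of coefficient field) or the base of neighbourhoods of $0$ described by (\ref{eqn:definebasicopenhighertopmixed}) attached to a system of parameters and the canonical lifting $h$ (standard mixed characteristic), and then reduce the non-standard mixed-characteristic case, and via the classification of higher local fields the general case, to these two through the linear homeomorphism $F\simeq M^{[F:M]}$ with $M\subseteq F$ a standard subfield; in each case $\overline{F}$ is an $(n-1)$-dimensional local field to which the inductive hypothesis applies. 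With this in place, properties (i), (iii) and (iv) are essentially what the construction of higher topologies yields and I would cite \cite{madunts-zhukov-topology-hlfs}, recording only the mechanism: Proposition \ref{prop:liftoftop} (and its mixed-characteristic counterpart) already gives that $(F,+)$ is a topological group; Hausdorffness follows by separating a nonzero series from $0$ through a single coefficient, using that $\overline{F}$ is Hausdorff; completeness is obtained coefficient by coefficient from the completeness of $\overline{F}$, with the proviso that, by (ii), the topology is not first countable, so one works with Cauchy filters rather than sequences; (iv) holds because multiplication by a fixed nonzero element acts on the coefficients of a series through finitely many additions and multiplications by a fixed element in $\overline{F}$; and the sequential continuity of multiplication and of inversion, together with the failure of continuity of multiplication when $n>1$, is the substantive content of \cite{madunts-zhukov-topology-hlfs}.

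For (ii) I would run the usual diagonal argument. Suppose $\{\mathcal{V}_k\}_{k\in\mathbb{N}}$ were a countable base of neighbourhoods of $0$, and refine each to a basic open set $\mathcal{U}_k\subseteq\mathcal{V}_k$ of the form (\ref{eqn:definebasicopenhighertop}), given by a sequence $\{U^{(k)}_i\}_i$ of open neighbourhoods of $0$ in $\overline{F}$. Since $\overline{F}$ is a Hausdorff topological group that is not discrete (inductive hypothesis when $n-1>1$, classical when $n-1=1$), for each $k$ one may choose $a_k\neq 0$ in $U^{(k)}_{-k}$ and an open neighbourhood $W_{-k}\ni 0$ of $\overline{F}$ with $W_{-k}\neq\overline{F}$ and $a_k\notin W_{-k}$; setting $W_j=\overline{F}$ for every other $j$, the set $\mathcal{W}$ built from $\{W_j\}_j$ by (\ref{eqn:definebasicopenhighertop}) is a neighbourhood of $0$. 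Then the monomial $a_kt^{-k}$ lies in $\mathcal{U}_k\setminus\mathcal{W}$, so $\mathcal{V}_k\not\subseteq\mathcal{W}$ for every $k$, contradicting that $\{\mathcal{V}_k\}_k$ is a base; the degenerate possibility that every $\mathcal{U}_k$ equals $F$ is excluded because $F$ itself has proper neighbourhoods of $0$.

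The new statement (v) is in fact the easiest of the five once the basic open sets are unwound. In the equal-characteristic presentation a basic neighbourhood of $0$ in $\OO_F=\overline{F}[[t]]$ is $\mathcal{U}\cap\OO_F=\{\sum_{i\geq 0}f_it^i : f_i\in U_i\}$ for $\mathcal{U}$ as in (\ref{eqn:definebasicopenhighertop}), and $\rho$ maps it onto $U_0$: the inclusion $\rho(\mathcal{U}\cap\OO_F)\subseteq U_0$ is immediate, and every $c\in U_0$ is the residue of the element $\sum_{i\geq 0}f_it^i$ with $f_0=c$ and $f_i=0$ for $i\geq 1$, which lies in $\mathcal{U}\cap\OO_F$. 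In the standard mixed-characteristic case the identical computation applies: by (\ref{eqn:definebasicopenhighertopmixed}) one has $\mathcal{U}\cap\OO_F=\{\sum_{i\geq 0}h(f_i)\pi_F^i : f_i\in U_i\}$, because the valuation of such a series equals $\min\{i : f_i\neq 0\}$, and since $h$ is a section of $\rho$ and the terms with $i\geq 1$ lie in $\pp_F$ one gets $\rho(\sum_{i\geq 0}h(f_i)\pi_F^i)=f_0\in U_0$, while conversely each $c\in U_0$ is attained by taking $f_0=c$ and $f_i=0$ for $i\geq 1$. Hence in both cases $\rho$ carries a base of neighbourhoods of $0$ in $\OO_F$ onto open subsets of $\overline{F}$, and being a homomorphism of topological groups it is therefore open; the non-standard and general cases follow by transporting this along $F\simeq M^{[F:M]}$, using that $\rho_F$ is then built out of $\rho_M$ in a way that preserves openness.

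The part I expect to demand the most work is not (v) but (i) and (iii): completeness outside the first-countable setting, and above all the sequential continuity of multiplication, are the genuine technical core of the theory, and I would invoke them from \cite{madunts-zhukov-topology-hlfs} rather than reprove them. The only care specific to this proposition, beyond the diagonal argument for (ii), is the bookkeeping needed to reduce an arbitrary higher local field, with its choices of a system of parameters and --- in residue characteristic zero --- a coefficient field, to the two standard presentations in which the short computation for (v) takes place.
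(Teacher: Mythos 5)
Your proof of part (v) --- reducing to the equal-characteristic and standard mixed-characteristic presentations, showing a basic neighbourhood $\mathcal{U}\cap\OO_F$ maps onto $U_0$, and transporting the non-standard case along $F\simeq M^{[F:M]}$ --- is exactly the argument the paper gives, and like the paper you defer (i)--(iv) to the literature (the paper cites only \cite{ihlf} for these and does not reprove (ii), whereas you sketch a correct diagonal argument for it). The proposal is correct and takes essentially the same approach.
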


\begin{proof}
  See \cite[\textsection 1.3.2]{ihlf} for parts {\it (i)} to {\it (iv)}. Regarding {\it (v)}, the statement is a well known fact but unfortunately seems to be unavailable in the literature; we shall provide a proof for completeness.
 
  First assume that $F \simeq \overline{F}\roundb{t}$ is of equal-characteristic. In such case, let $U_0$ be an open neighbourhood of zero in $\overline{F}$. Then 
  \begin{equation*}
    \rho^{-1}\left( U_0 \right) = U_0 + \sum_{i \geq 1} \overline{F} t^i = U_0 + t \OO_F,
  \end{equation*}
  which is open in $F$. Moreover, an open neighbourhood of zero $\mathcal{U} \subseteq \OO_F$ is of the form $\mathcal{U} = \sum_{i\geq 0} U_i t^i$ where $U_i \subseteq \overline{F}$ are open neighbourhoods of zero, and we have $\rho\left( \mathcal{U} \right) = U_0$.

  Second, assume that $F$ is of mixed-characteristic. Without loss of generality, we may assume that $F \simeq F'\curlyb{t}$ is standard, as openness of maps is preserved on finite cartesian products. Let $U_0 \subseteq \overline{F}$ be an open neighbourhood of zero. Then $\rho^{-1}\left( U_0 \right) = h(U_0) + \pi_F \OO_F$, which is open. Finally, if $\mathcal{U}$ is as in (\ref{eqn:definebasicopenhighertopmixed}), then $\rho(\mathcal{U}) = U_0$, which is open.
\end{proof}

\begin{rmk}
  Failure of a higher topology on $F$ to be a ring topology is the original reason that led to the consideration of sequential rings in this work.
\end{rmk}

A group topology on an abelian group is said to be linear if the filter of neighbourhoods of the identity element admits a collection of subgroups as a basis. A commutative ring $R$ provided with a topology $\tau$ which is linear for the additive group and for which multiplication maps
\begin{equation*}
  R \to R,\quad y \mapsto xy
\end{equation*}
are continuous for all $x \in R$ is said to be a semitopological ring.

In view of (i) and (iv) in Proposition \ref{prop:hdtop}, it is possible to show that a higher local field endowed with a higher topology is a semitopological ring. A theory of semitopological rings has been developed and applied to the study of higher fields by Yekutieli \cite{yekutieli-explicit-construction-grothendieck-residue-complex} and others.

There seems to be a disagreement between the sequential and linear approaches to topologies on higher local fields.

\begin{thm}
  \label{thm:norelationlinearseq}
  Let $F$ be a higher local field. Denote $\tau$ for the topology on $F$ defined by Proposition \ref{prop:hdtop}, and let $\tau_s$ be its sequential saturation. The collections of open subgroups for $\tau$ and $\tau_s$ agree.
\end{thm}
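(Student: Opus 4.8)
The plan is to show two inclusions at the level of open subgroups: every subgroup open for $\tau$ is (trivially) open for $\tau_s$, since $\tau_s$ is finer than $\tau$; and conversely, every subgroup $H \leq (F,+)$ that is sequentially open is already open for $\tau$. The first inclusion is immediate from the Proposition following Definition of sequential saturation. The content is entirely in the converse, and the natural strategy is induction on the dimension $n$, mirroring the inductive construction of the higher topology recalled before Proposition \ref{prop:hdtop}: for $n=1$ the topology is the valuation topology, which is first countable, hence $\tau = \tau_s$ and there is nothing to prove. For the inductive step I would reduce, using that openness of the relevant maps and finiteness of the index $[F:M]$ are preserved under finite Cartesian products (exactly as in the proof of (v) above), to the case where $F \simeq \overline{F}\roundb{t}$ is of equal characteristic or $F \simeq F'\curlyb{t}$ is standard of mixed characteristic, with the higher topology on $\overline{F}$ (resp.\ on $F'$) already satisfying the theorem by induction.

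The key step is then: given a sequentially open subgroup $H$ of $F$, produce a $\tau$-open basic neighbourhood $\mathcal{U}$ of $0$ (of the form (\ref{eqn:definebasicopenhighertop}) or (\ref{eqn:definebasicopenhighertopmixed})) contained in $H$. Write elements of $F$ as series $\sum_{i \gg -\infty} f_i t^i$ (or $\sum h(f_i)\pi_F^i$). First one controls the "tail": I claim there is an $i_0$ with $t^{i_0}\OO_F \subseteq H$. If not, for every $i$ one could choose $x_i \in t^i\OO_F \setminus H$; but $x_i \to 0$ in $\tau$ (the valuation is the coarsest ingredient, and $t^i\OO_F$ is a fundamental system for the valuation topology on that "level"), and a sequentially open set containing $0$ must contain almost all $x_i$ — contradiction, provided $H$ contains $0$, which it does being a subgroup. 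This pins down the coefficients $f_i$ with $i \geq i_0$ entirely, giving the "$U_i = \overline{F}$ for $i \geq i_0$" condition for free. For each remaining index $i < i_0$, one must find an open neighbourhood $U_i$ of $0$ in $\overline{F}$ (in the equal-characteristic case) or, via the canonical lifting $h$, so that the coefficient-$i$ slice of $H$ contains $h(U_i)$; here the additive-group structure of $H$ lets us treat each coordinate separately and the inductive hypothesis on $\overline{F}$ (a sequentially open subgroup there is open) is used after checking that the slice $\{ f \in \overline{F} : h(f)\pi_F^i \in H \}$ is a sequentially open subgroup of $\overline{F}$ — sequential openness transfers because $h$ is sequentially continuous, or more directly because a convergent sequence in $\overline{F}$ lifts, coordinate-by-coordinate, to a convergent sequence in $F$ landing in the corresponding coset. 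Assembling the finitely many $U_i$ ($i < i_0$) together with $U_i = \overline{F}$ for $i \geq i_0$ yields a basic $\tau$-open $\mathcal{U}$; one then checks $\mathcal{U} \subseteq H$ using that $H$ is a subgroup closed under the (possibly infinite, but $t$-adically convergent) sums that define elements of $\mathcal{U}$, i.e.\ that $H$, being sequentially closed's complement's... rather: that partial sums of an element of $\mathcal{U}$ converge to it and lie in $H$, so the limit does too by sequential closedness of the subgroup $H$. This last point needs $H$ sequentially closed as well as sequentially open, which is automatic for a subgroup whose complement is a union of cosets each sequentially open.

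The main obstacle I anticipate is the bookkeeping in the mixed-characteristic standard case: the canonical lifting $h$ is only a set-theoretic section, not additive, so "slicing by coefficients" is not as clean as in the equal-characteristic case, and one must argue that the relevant slices of $H$ are still subgroups of $\overline{F}$ up to the distortion introduced by $h$ — this is where the special properties of the canonical lifting from \cite[Lemma 1.2]{madunts-zhukov-topology-hlfs} (compatibility with the ring structure modulo higher-order terms) have to be invoked carefully. A secondary subtlety is making sure the "tail" argument is applied at the right level of the tower: in dimension $>2$ the residue field is itself higher-dimensional, so $t^i\OO_F$ is not a $\tau$-neighbourhood of $0$ on its own, and the convergence $x_i \to 0$ must be justified by exhibiting, for each basic $\tau$-neighbourhood, an index beyond which $t^i\OO_F$ is contained in it — which follows from the shape of (\ref{eqn:definebasicopenhighertop})/(\ref{eqn:definebasicopenhighertopmixed}) since $U_j = \overline{F}$ (resp.\ $= F'$) for $j$ large. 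Once these are handled, the two inclusions close the argument and give $\tau$ and $\tau_s$ the same open subgroups.
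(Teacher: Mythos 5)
Your argument is correct in outline and shares the paper's overall architecture (the $\tau\Rightarrow\tau_s$ direction is trivial; for the converse one manufactures a basic $\tau$-open subgroup inside the sequentially open subgroup $H$ by a contradiction argument with sequences converging to $0$, and then writes $H$ as a union of its cosets), but the mechanism for the key step is genuinely different. The paper works with the filtration $F_i = t^i\OO_F$, first reducing sequential openness to sequential openness on each $F_i$, and then tests $H\cap F_i$ against the single explicit countable family of subgroups $t^i\left( u^n O_F + t^n\OO_F \right)$ built from the rank-$n$ ring of integers $O_F$ of (\ref{eqn:higherranksubrings}) and a lift $u$ of a uniformizer of $\overline{F}$: if none of these sits inside $H\cap F_i$, one picks witnesses outside $H$ tending to $0$ and gets a contradiction in one shot, with no induction on the dimension, since the residues of these subgroups are automatically open subgroups of $\overline{F}$. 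You instead induct on $n$ and extract, for each coefficient index, the slice $\left\{ f\in\overline{F} : h(f)\pi_F^i\in H \right\}$, proving it sequentially open and invoking the inductive hypothesis on $\overline{F}$; this works cleanly in equal characteristic, and your reassembly of $\mathcal{U}$ from the slices via partial sums and sequential closedness of $H$ is a valid point that the paper leaves implicit. What your route costs is exactly the difficulty you flag: in mixed characteristic the slices need not be subgroups because $h$ is only a set-theoretic section, so you must invoke the special properties of the canonical lifting, whereas the paper's $O_F$-based family sidesteps most of this (its proof still contains a terse ``$U_i$ may be modified'' step for the same reason). Two minor corrections: there are infinitely many indices $i<i_0$, not finitely many, so you assemble an infinite family $\left\{ U_i \right\}_{i\in\mathbb{Z}}$ with $U_i=\overline{F}$ for $i\geq i_0$, which is exactly what (\ref{eqn:definebasicopenhighertop}) and (\ref{eqn:definebasicopenhighertopmixed}) permit; and in the nonstandard mixed-characteristic reduction $F\simeq M^{[F:M]}$ you should note explicitly that a sequentially open subgroup of a finite product contains the product of its coordinate slices, each open by the standard case, since a subgroup of a product need not itself be a product.
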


\begin{proof}
We split the proof in several steps. Let $F_i = t^i \OO_F$.

\textit{Step 1. A subset $A \subseteq F$ is sequentially open if and only if $A \cap F_i$ is sequentially open in $F_i$ for every $i$}.

Suppose that $A \cap F_i$ is sequentially open in $F_i$ for every $i$. Let $x_n \to x \in A$. Then, there is an index $j$ such that $x_n \in F_j$ for all $n$, and since $F_j$ is sequentially closed, $x \in F_j$. As $A \cap F_j$ is sequentially open and $x \in A \cap F_j$, almost all of the $x_n$ belong to $A \cap F_j \subset A$, showing that $A$ is sequentially open.

\textit{Step 2. Let $Y \subset F_i$ be a subset such that $0 \in Y$. $Y$ is sequentially open if and only if it contains a subgroup $\mathcal{U} = \sum_{j \geq i} U_j t^j$, where the residues of elements in $U_j$ are open subgroups of $\overline{F}$ and $U_j = \overline{F}$ if $j$ is large enough}.

Let $u$ be a lift of a uniformizer of $\overline{F}$. If no such subgroup $\mathcal{U}$ is contained in $Y$, then for every $n$ there is an element $y_n \in t^i\left( u^n O_F + t^n \OO_F \right)$. However, $y_n \to 0 \in Y$ and therefore almost all of the $y_n$ must belong to $Y$, a contradiction.

\textit{Step 3. A subgroup of $H$ of $F$ is open for $\tau$ if and only if it is open for $\tau_s$}.

Suppose that $H$ is open for $\tau_s$. By step 1, $H \cap F_i$ is sequentially open and by step 2 it contains a subset $U_i t^i$, such that the image of $U_i$ in $\overline{F}$ is open and it contains $F_n$ for some $n$.

Put $\mathcal{U} = \sum U_i t^i \cap F$.

If $\car F = \car \overline{F}$, $U_i$ may be chosen so that it is an open subgroup of $\overline{F}$ viewed inside $F$. Otherwise, $U_i$ may be modified to ensure that $\mathcal{U}$ is a subgroup of $H$ such that the image of $U_i$ in $\overline{F}$ is an open subgroup and $\mathcal{U}$ contains some $F_n$. In both cases, $\mathcal{U}$ is an open subgroup for $\tau$.

Since $H$ is the union of $\mathcal{U}$-cosets, $H$ is also an open subgroup for $\tau$.
\end{proof}

\begin{corollary}
  For a general higher local field $F$ and a higher topology $\tau$, we have that $\tau_s$ is not a linear topology.
\end{corollary}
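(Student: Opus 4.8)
The plan is to reduce, via Theorem~\ref{thm:norelationlinearseq}, to the assertion that a higher topology on a higher local field of dimension $n\ge 2$ is not sequential; for $n=1$ the field is metrizable and $\tau_s=\tau$ is the linear valuation topology, so the statement is understood for $n\ge 2$. First I would suppose, for contradiction, that $\tau_s$ is linear and fix a base $\mathcal{B}$ of neighbourhoods of $0$ for $\tau_s$ consisting of open subgroups of $(F,+)$. By Theorem~\ref{thm:norelationlinearseq} every $H\in\mathcal{B}$ is open for $\tau$, hence so is each of its cosets; therefore every $\tau_s$-open set, being a union of such cosets, is $\tau$-open, and since $\tau\subseteq\tau_s$ always holds this forces $\tau=\tau_s$. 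It thus suffices to exhibit a sequentially open subset of $F$ that is not open.

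For this I would use the filtration $F_i=\pi_F^i\OO_F$ together with Steps~1 and~2 of the proof of Theorem~\ref{thm:norelationlinearseq}. Since every $\tau$-convergent sequence is contained in a single $F_i$, Step~1 shows that $A\subseteq F$ is sequentially open if and only if each $A\cap F_i$ is sequentially open in $F_i$, while Step~2 (applied to translates, all of which contain $0$) shows that each $F_i$ is itself sequential; consequently $\tau_s$ is exactly the colimit (final) topology of $F$ along the inclusions $F_i\hookrightarrow F$. Hence $\tau=\tau_s$ would mean that $\tau$ already equals this colimit topology, and the task becomes to produce a set $A\ni 0$ which is open in every $F_i$ but is not a $\tau$-neighbourhood of $0$.

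The construction I have in mind is to choose, for each $i\le 0$, a standard open subgroup $\mathcal{H}^{(i)}\subseteq F_i$ of the form $\sum_{j\ge i}U^{(i)}_j\pi_F^j$ which forces the expansion coefficients indexed by a window $[\,i,c(i)\,]$ to lie in $\pp_{\overline{F}}^{-i}$ (in mixed characteristic, to have canonical lift lying there, exactly as in the proof of Theorem~\ref{thm:norelationlinearseq}), with the windows $c(i)\to\infty$ and the depths $-i\to\infty$ chosen \emph{coherently} from one slice to the next, and to let $A$ be the union of the $\mathcal{H}^{(i)}$ (together with $F_1$). Each slice $A\cap F_\ell$ is then a union of standard open subgroups of $F_\ell$, hence open in $F_\ell$, so $A$ is sequentially open; on the other hand any basic $\tau$-neighbourhood of $0$ has the rigid shape ``$U_j=\overline{F}$ for all large $j$'', and one checks that such a neighbourhood always contains an element of very negative $\pi_F$-order whose coefficient at some large index is shallow, and that element lies outside every $\mathcal{H}^{(i)}$; thus $A$ is not a $\tau$-neighbourhood of $0$. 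The step I expect to be most delicate is precisely the coherence: the conditions defining $A$ on one slice must stay satisfiable near the ``boundary'' upon passage to a deeper slice, which is what pins down the admissible growth of the windows and depths; once this is arranged the verification on each $F_\ell$ is a finite check, and the argument is insensitive to the dimension of $F$.
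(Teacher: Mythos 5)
Your reduction is exactly the paper's: a linear topology is determined by its open subgroups, Theorem~\ref{thm:norelationlinearseq} identifies the open subgroups of $\tau$ and $\tau_s$, so linearity of $\tau_s$ would force $\tau_s\subseteq\tau$ and hence $\tau=\tau_s$; everything therefore hinges on producing a sequentially open set that is not open. Where you diverge is in how that set is produced. The paper does not attempt a uniform construction at all: it simply recovers Fesenko's explicit example in $F=\mathbb{F}_p\roundb{u}\roundb{t}$, namely $W=F\setminus C$ with $C=\{t^au^{-c}+t^{-a}u^c:\ a,c\geq 1\}$. There $W$ is sequentially open because any convergent sequence lies in a fixed $F_j$ and has convergent coefficients, so a sequence in $C$ is eventually constant and $C$ is sequentially closed; and $W$ is not a $\tau$-neighbourhood of $0$ because a basic neighbourhood $\sum U_it^i$ with $U_i=\overline{F}$ for $i\geq i_0$ contains $t^au^{-c}+t^{-a}u^c$ as soon as $a\geq\max(i_0,1)$ and $u^c\in U_{-a}$. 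Since the corollary only asserts non-linearity ``in general'', one counterexample suffices, and this is all the paper proves.

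Your alternative — a union $A=\bigcup_{i\leq 0}\mathcal{H}^{(i)}$ of progressively deeper open subgroups of the slices $F_i$ — is a genuinely different and more ambitious route, aiming at every $F$ of dimension $\geq 2$, and the skeleton is sound: each $A\cap F_\ell$ is a union of open subgroups of $F_\ell$, hence open, hence $A$ is sequentially open by Step~1. But the crux is left unexecuted in two places. First, the ``coherence'' of windows and depths is not actually pinned down; note that a simple explicit choice already works in equal characteristic, e.g.\ $U^{(i)}_j=\pp_{\overline{F}}^{\,-i}$ for $i\leq j\leq -i$ and $U^{(i)}_j=\overline{F}$ for $j>-i$: then any basic $\tau$-neighbourhood with $U_j=\overline{F}$ for $j\geq j_0$ contains $x=t^{-a}y+t^{a}$ for $a\geq\max(j_0,1)$ and suitable $y\in U_{-a}\setminus\{0\}$, and $x$ escapes every $\mathcal{H}^{(i)}$ because $i\leq -a$ forces $x_a=1\in\pp_{\overline{F}}^{\,a}$, which is false. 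Second, the mixed-characteristic case is only gestured at; there the conditions must be phrased via the canonical lifting $h$ and one must check they still cut out subgroups, which is precisely the technical content of Step~3 of Theorem~\ref{thm:norelationlinearseq}. As submitted, then, the proof is an incomplete (though completable) sketch of a stronger statement, whereas the intended proof is a two-line verification on a single field.
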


\begin{proof}
  A linear topology is completely determined by its collection of open subgroups. After the previous theorem, it suffices to show that $\tau \neq \tau_s$. So we recover a counterexample from \cite{fesenko-sequential-topologies}.
  
  Let $F = \mathbb{F}_p \roundb{u} \roundb{t}$. Let $C = \left\{ t^a u^{-c} + t^{-a} u^c,\; a, c \geq 1 \right\}$. Then $W = F \setminus C$ is open for $\tau_s$.

  Suppose that $U_i \subset \mathbb{F}_p \roundb{u}$ are open subgroups such that $U_i = \mathbb{F}_p \roundb{u}$ if $i$ is large enough and such that $\mathcal{U} = \sum U_i t^i \cap F$ is contained in $W$. Then, for any positive $c$ such that $u^c \in U_{-a}$ we would have $t^a u^{-c} + t^a u^{c} \in W$, a contradiction. Hence, $W$ is not open for $\tau$.
\end{proof}

Although the approach to higher topologies by using linear topologies and semitopological rings is useful for the study of the construction of higher adeles by means of ind-pro functors, the sequential approach is very important from the point of view of higher class field theory. When dealing with rational points over higher local fields, the sequential approach will allow us to say something about the continuity of polynomial maps $R^n \to R$, whereas this is not possible with a semitopological ring.

\subsection{The unit group of a higher local field}

There are several approaches to constructing topologies on $F^\times$. Once we know that $F$ is a sequential ring, and that inversion is sequentially continuous, a natural definition is the following.

\begin{df}
  \label{df:toponunitsHLF}
  The topology we consider on $F^\times \subset F$ is the sequential saturation of the subspace topology. 
\end{df}

\begin{rmk}
  Compare with (\ref{eqn:topringembedintotwocopies}); in the case of a higher local field, taking the embedding into two copies of $F$ is unnecessary, as we know that inversion is sequentially continuous.
\end{rmk}

The proof of the result below is obvious.

\begin{prop}
  The topology on $F^\times$ given by Definition \ref{df:toponunitsHLF} is a group topology; $F^\times$ is a Hausdorff complete group.
\end{prop}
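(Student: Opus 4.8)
The plan is to verify the three assertions --- that $F^\times$ with the topology of Definition \ref{df:toponunitsHLF} is a group topology, that it is Hausdorff, and that it is complete --- in turn, leaning on the properties of $F$ established in Proposition \ref{prop:hdtop} and the general facts about sequential saturation from \textsection\ref{sec:sectopgpsandrings}. For the \emph{group topology} claim, note first that by Proposition \ref{prop:hdtop}(iii), both multiplication $F \times F \to F$ and inversion $F^\times \to F^\times$ are sequentially continuous for $\tau$; restricting, multiplication $F^\times \times F^\times \to F^\times$ and inversion $F^\times \to F^\times$ are sequentially continuous for the subspace topology on $F^\times$. By the universal property of sequential saturation (the Proposition characterising sequentially continuous maps as those continuous out of the saturation), these maps remain sequentially continuous after passing to $\tau_s$ on the source; one then has to pass to $\tau_s$ on the target as well. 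The clean way is to mimic the remark following Definition \ref{df:seqring} on $R^\times$: a sequentially continuous map whose target is given the sequential saturation is automatically continuous, \emph{provided} the source is also saturated, and the only subtlety is that $F^\times \times F^\times$ with the product of the saturated topologies need not itself be saturated. Here I would invoke that $F^\times$ is first countable --- which follows because $F$, though not first countable, has the property that $\OO_F^\times \cong$ a countable product is handled sequence-wise; more carefully, one argues that a convergent sequence in $F^\times$ for $\tau_s$ is the same as a convergent sequence for $\tau$, so sequential continuity on the product is tested on genuine sequences and the product-versus-saturation issue evaporates. So the key point is: $x_n \to x$ in $(F^\times, \tau_s)$ iff $x_n \to x$ in $(F^\times, \tau|_{F^\times})$, because saturation does not change convergent sequences; hence sequential continuity of multiplication and inversion for the subspace topology gives the same for $\tau_s$, and since $\tau_s$ is by construction sequential, sequential continuity \emph{is} continuity.

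For the \emph{Hausdorff} claim: $(F, \tau)$ is Hausdorff by Proposition \ref{prop:hdtop}(i), so the subspace topology on $F^\times$ is Hausdorff, and $\tau_s$ is finer than the subspace topology, hence also Hausdorff. For \emph{completeness}, I would argue that $F^\times$ is a sequentially closed (indeed closed) condition is not quite what is needed --- rather, since we are dealing with a topological group that is not first countable, "complete" should be read in the sense appropriate to the higher-topology setting (sequential completeness, i.e. every Cauchy sequence converges, or completeness as a topological group via Cauchy filters). I would take the sequential reading to match the spirit of the paper: given a sequence $(x_n)$ in $F^\times$ that is Cauchy for $\tau_s$, it is Cauchy for the subspace topology, hence Cauchy in $(F, \tau)$, which is complete by \ref{prop:hdtop}(i), so $x_n \to x$ in $F$ for some $x \in F$; simultaneously $(x_n^{-1})$ is Cauchy (this is where sequential continuity of inversion, plus the Cauchy condition, is used, exactly as in the classical topological-group argument for completeness of the unit group via the embedding \eqref{eqn:topringembedintotwocopies}), so $x_n^{-1} \to y$ for some $y \in F$; then $xy = \lim x_n x_n^{-1} = 1$ by sequential continuity of multiplication, so $x \in F^\times$ and $x_n \to x$ in $F^\times$ for $\tau_s$ as well, since convergence in $F^\times$ for $\tau_s$ reduces to convergence for the subspace topology.

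I expect the main obstacle to be the bookkeeping around the product topology: making rigorous the claim that sequential continuity of $(x,y) \mapsto xy$ on $F \times F$ (with the product of the higher topologies) descends correctly to $F^\times \times F^\times$ with the product of the \emph{saturated} subspace topologies, given that products do not commute with sequential saturation (as emphasised in \textsection\ref{sec:sectopgpsandrings}). The resolution --- and the crux of the "obvious" proof the paper alludes to --- is that we never actually need continuity of multiplication on the product; we only need that sequences converging in each saturated factor are ordinary convergent sequences in $F$, so the relevant diagonal/product sequences converge in $F \times F$ and sequential continuity of multiplication applies directly, after which one checks the limit lies in $F^\times$. Everything else is routine: finer-than-Hausdorff-is-Hausdorff, and the standard unit-group completeness argument transcribed into the sequential language. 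I would therefore write the proof as: (1) saturation preserves convergent sequences, hence the subspace topology and $\tau_s$ have the same convergent sequences on $F^\times$; (2) deduce sequential continuity of multiplication and inversion on $F^\times$ and hence, $\tau_s$ being sequential, continuity; (3) Hausdorffness by comparison of topologies; (4) completeness by the Cauchy-sequence argument above.
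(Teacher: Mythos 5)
The paper offers no proof of this proposition at all---it is prefaced by ``The proof of the result below is obvious''---so there is no argument to compare yours against line by line. Your overall strategy, namely that sequential saturation does not change the class of convergent sequences, so that everything reduces to parts (i) and (iii) of Proposition \ref{prop:hdtop}, is surely the intended one, and your step (1) is the correct key observation. Two points need repair, however. First, the parenthetical appeal to first countability of $F^\times$ is false and should be deleted: by Proposition \ref{prop:hdtop}(ii), for $n>1$ no base of neighbourhoods of zero in $F$ is countable, and $F^\times$ inherits this; your own subsequent remark (convergent sequences for $\tau_s$ coincide with those for the subspace topology) is the correct replacement. More substantively, what your argument actually establishes is \emph{sequential} continuity of multiplication $F^\times\times F^\times\to F^\times$, i.e.\ that $F^\times$ is a group object in $\Seq$ in the sense of the paper's definition of a sequential group. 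Since the product of two sequential spaces need not be sequential, continuity for the plain product topology does not follow formally from this, so you must either read ``group topology'' in the sequential sense (consistent with the paper's conventions and the remark following Definition \ref{df:seqring}, where the product is taken in $\Seq$) or supply a separate argument; inversion causes no such difficulty because its source $(F^\times,\tau_s)$ is itself sequential.

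Second, the completeness argument conflates two uniformities. ``Cauchy in $F^\times$'' refers to the multiplicative group structure, $x_nx_m^{-1}\to 1$, whereas the completeness of $F$ asserted in Proposition \ref{prop:hdtop}(i) is for the additive group; the inference ``Cauchy in $(F^\times,\tau_s)$, hence Cauchy in $(F,\tau)$'' is not automatic, and likewise the Cauchyness of $(x_n^{-1})$ requires justification. The missing step is to show that a multiplicatively Cauchy sequence in $F^\times$ has eventually constant rank-$n$ valuation (this is exactly the phenomenon isolated in the proposition comparing the Parshin topology with the topology from \eqref{eqn:topringembedintotwocopies}), so that it can be written as a fixed monomial in the parameters times a sequence of units of $O_F$; for such a sequence multiplicative Cauchyness does imply additive Cauchyness of the sequence and of its inverses, after which completeness of $(F,+)$ and your limit computation $xy=\lim x_nx_n^{-1}=1$ go through. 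You are right that ``complete'' should be read sequentially here; with these two repairs the proof is sound.
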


\vskip .5cm

Assume, until the end of this paragraph, that $\car F > 0$. In this case, Parshin suggested a different approach, which was later refined by Yekutieli \cite[\textsection 3]{yekutieli-explicit-construction-grothendieck-residue-complex}. After choosing parameters, we obtain a decomposition
\begin{equation}
  \label{eqn:decomposeunitsofhlf}
  F^\times \simeq \mathbb{Z} t_n \oplus \cdots \oplus \mathbb{Z} t_1 \oplus F_0^\times \oplus V_F,
\end{equation}
where $F_0$ is the last residue field and $V_F$ is the group of principal units. $F^\times$ is provided with a topology by (\ref{eqn:decomposeunitsofhlf}) once $\mathbb{Z} t_n \oplus \cdots \oplus \mathbb{Z} t_1 \oplus F_0^\times$ is given the discrete topology and $V_F \subset \OO_F$ the subspace topology for the higher topology; this is called the Parshin topology.

We summarize the properties of this topology on units below.

\begin{prop}
  \label{prop:propsoftoponunits}
  Suppose $\car F > 0$. The topology we have defined on $F^\times$ satisfies:
  \begin{enumerate}
    \item The group $F^\times$ is a topological group with respect to the Parshin topology only when $F$ is of dimension at most 2.
    \item The Parshin topology is weaker than the valuation topology on $F^\times$.
    \item The Parshin topology does not depend on the choice of a system of parameters.
    \item $F^\times$ is complete.
    \item Multiplication on $F^\times$ is sequentially continuous.
  \end{enumerate}
\end{prop}

\begin{proof}
  See \cite[\textsection 3]{madunts-zhukov-topology-hlfs} for {\it (i)} to {\it (iii)}, \cite[\textsection 1.4]{ihlf} for the rest.
\end{proof}

\begin{prop}
  Suppose $\car F > 0$. The sequential saturation of the Parshin topology on $F^\times$ agrees with the sequential saturation of the topology on $F^\times$ defined by (\ref{eqn:topringembedintotwocopies}).
\end{prop}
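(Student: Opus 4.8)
The plan is to prove that the Parshin topology $\tau_P$ on $F^\times$ and the topology $\tau_\iota$ defined by the embedding (\ref{eqn:topringembedintotwocopies}) have precisely the same convergent sequences, with the same limits; since the sequentially open subsets of a topological space are determined by its convergent sequences, this gives $(\tau_P)_s = (\tau_\iota)_s$ at once. First I would observe that translation by any fixed element preserves convergent sequences in both topologies: for $\tau_\iota$ because multiplication by a nonzero element is a homeomorphism of $F$ (Proposition~\ref{prop:hdtop}~(iv)) and (\ref{eqn:topringembedintotwocopies}) intertwines such a translation with a homeomorphism of $F \times F$; for $\tau_P$ because multiplication on $F^\times$ is sequentially continuous (Proposition~\ref{prop:propsoftoponunits}~(v)). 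Thus it suffices to compare sequences converging to $1$, and here the definitions unwind transparently. On the one hand, $x_n \to 1$ for $\tau_\iota$ means exactly that $x_n \to 1$ and $x_n^{-1} \to 1$ in the higher topology on $F$. On the other hand, using the decomposition (\ref{eqn:decomposeunitsofhlf}) together with the discreteness of the factor $\ZZ t_n \oplus \cdots \oplus \ZZ t_1 \oplus F_0^\times$, $x_n \to 1$ for $\tau_P$ means exactly that $x_n \in V_F$ for almost all $n$ and $x_n \to 1$ in the higher topology on $F$.

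With this reformulation, one implication is free: if $x_n \to 1$ for $\tau_P$ then $x_n \to 1$ in the higher topology, hence $x_n^{-1} \to 1$ too, because inversion on $F^\times$ is sequentially continuous (Proposition~\ref{prop:hdtop}~(iii)); so $x_n \to 1$ for $\tau_\iota$.

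The substance is the reverse implication, which I would deduce from the following claim, proved by induction on $n = \dim F$ (recall $\car F > 0$, so $F \simeq \overline{F}\roundb{t_n}$): \emph{if $x_n \to 1$ and $x_n^{-1} \to 1$ in the higher topology on $F$, then $x_n \in V_F$ for almost all $n$}. The case $n = 1$ is the classical fact for local fields. For the inductive step, write $x_n = \sum_i c_i^{(n)} t_n^i$ with $c_i^{(n)} \in \overline{F}$. The first point is that $v_F(x_n) = 0$ for almost all $n$. One cannot argue here via openness of $\OO_F$: for $n \geq 2$ neither $\OO_F$ nor the maximal ideal of $O_F$ is open in the higher topology (the valuation topology is strictly finer). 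Instead, for every open neighbourhood $U_0$ of $0$ in $\overline{F}$, the set of $y \in F$ whose coefficient of $t_n^0$ lies in $U_0$ is open in $F$: it has the form (\ref{eqn:definebasicopenhighertop}) with $U_0$ in degree $0$ and $\overline{F}$ in every other degree. Testing $x_n \to 1$ and $x_n^{-1} \to 1$ against these neighbourhoods shows that the coefficients of $t_n^0$ in $x_n$ and in $x_n^{-1}$ both converge to $1$ in $\overline{F}$; since $\overline{F}$ is Hausdorff (Proposition~\ref{prop:hdtop}~(i)) they are eventually nonzero, which forces $v_F(x_n) \leq 0$ and $v_F(x_n) = -v_F(x_n^{-1}) \geq 0$, i.e.\ $x_n \in \OO_F^\times$ for almost all $n$. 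For such $n$ one has $\rho(x_n) = c_0^{(n)}$ and $\rho(x_n^{-1}) = \rho(x_n)^{-1}$, and the coefficient computation above gives $\rho(x_n) \to 1$ and $\rho(x_n)^{-1} \to 1$ in the higher topology on the $(n-1)$-dimensional field $\overline{F}$. The inductive hypothesis then yields $\rho(x_n) \in V_{\overline{F}}$ for almost all $n$, and since $V_F = \{\, y \in \OO_F^\times : \rho(y) \in V_{\overline{F}}\,\}$ we conclude $x_n \in V_F$ eventually, completing the induction. Feeding this back: if $x_n \to 1$ for $\tau_\iota$ then $x_n \in V_F$ eventually and $x_n \to 1$ in the higher topology, which is precisely $x_n \to 1$ for $\tau_P$.

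I expect the real obstacle to be the valuation-stabilisation step in the induction. In dimension one, $x_n \to 1$ by itself already traps $x_n$ in $1 + \pp_F$; but for $n \geq 2$ one genuinely needs both $x_n \to 1$ and $x_n^{-1} \to 1$, and the argument must go through the ``coefficient'' neighbourhoods and the Hausdorffness of the residue field rather than through any openness of the rings of integers (which fails). The other ingredients --- the explicit form of $\tau_P$-convergent sequences coming from (\ref{eqn:decomposeunitsofhlf}), the effect of $\rho$ on constant terms, and the translation reduction --- should be routine.
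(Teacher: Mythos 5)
Your proposal is correct and follows essentially the same route as the paper: both reduce to comparing sequences converging to $1$ via the parameter decomposition (\ref{eqn:decomposeunitsofhlf}) and both hinge on the fact that convergence of $x_m$ and $x_m^{-1}$ in the higher topology forces $x_m$ into $V_F$ eventually. The only difference is that the paper merely asserts this last stabilisation fact (deferring to \cite{fesenko-sequential-topologies}), whereas you actually prove it, by induction on the dimension using the coefficient-of-$t^0$ neighbourhoods and Hausdorffness of the residue field; your argument for that step is sound.
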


\begin{proof}
  Denote the Parshin topology by $\tau$, and the initial topology defined by (\ref{eqn:topringembedintotwocopies}) by $\lambda$. Although this is implicitly included in \cite{fesenko-sequential-topologies}, we describe the explicit argument here.

  A sequence $a_m=t_n^{i_{n,m}} \cdots t_1^{i_{1,m}} u_m$ of elements of $F^\times$, with $u_m$ in the unit group of the ring of integers of $F$ with respect to any of its discrete valuations of rank $n$ tends to $1$ in $\lambda$ if and only if the sequence $u_m -1$ tends to $0$ with respect to the higher dimensional topology on $F$ (described by Proposition \ref{prop:hdtop}) and for every $j$ such that $1 \leq j \leq m$, the sequence $i_{j,m}$ is constant for sufficiently large $m$. But this last condition is equivalent to the sequence $a_m$ converging to $1$ with respect to $\tau$.
\end{proof}

\begin{exa}
  The $n$-th Milnor $K$-group of any field $F$ is presented as the term in the right in the following exact sequence of abelian groups:
  \begin{equation*}
    0 \rightarrow I_n \rightarrow F^\times \otimes_\ZZ \stackrel{(n)}{\cdots} \otimes_\ZZ F^\times \rightarrow K_n(F) \rightarrow 0,
  \end{equation*}
  where $I_n = \langle a_1 \otimes \cdots\otimes a_n;\; a_i + a_j = 1 \text{ for some } i \neq j \rangle_\ZZ$. By definition, $K_1(F) = F^\times$ and by convention $K_0(F) = \ZZ$.

When $F$ is an $n$-dimensional local field, $K_n$ generalises the role of $K_1 = \mathbb{G}_m$ in describing abelian extensions of the field \cite{ihlf}. From this point of view, it provides a correct higher dimensional generalization of the group of units.
The functor $K_n$ is not representable for $n \geq 2$, meaning that in general $K_n(F)$ is not the set of $F$-rational points on any scheme.

Let $F$ be an $n$-dimensional local field, and consider the topology on $F$ (resp. $F^\times$) given by Proposition \ref{prop:hdtop} (resp. \ref{prop:propsoftoponunits}).

Consider the finest topology on $K_m(F)$ for which:
\begin{enumerate}
  \item The symbol map $F^\times \times \cdots \times F^\times \rightarrow K_m(F)$ is sequentially continuous.
  \item Subtraction $K_m(F) \times K_m(F) \rightarrow K_m(F)$ is sequentially continuous.
\end{enumerate}
This topology is sequentially saturated \cite[\textsection 4, Remark 1]{fesenko-sequential-topologies}.

The topological $m$-th Milnor $K$-group is
\begin{equation}
  \label{eqn:deftopmilnorgps}
  K_m^{\mathrm{t}}(F) = K_m(F) / \Lambda_m(F),
\end{equation}
where $\Lambda_m(F)$ is the intersection of all open neighbourhoods of zero.

A sequentially continuous Steinberg map $F^\times \otimes_\mathbb{Z} \cdots \otimes_\mathbb{Z} F^\times \rightarrow G$ where $G$ is a Hausdorff topological group induces a continuous homomorphism $K_m^{\mathrm{t}}(F) \rightarrow G$. The Artin-Schreier-Parshin pairing, the Vostokov pairing and the tame symbol are examples of such continuous homomorphisms defined on $K_m^{\mathrm{t}}(F)$ \cite[I.6.4]{ihlf}.
\end{exa}

\subsection{Other higher complete fields}
\label{sec:otherhlfs}

There are other fields which can be defined in a similar way as a higher local field, and for which the construction of a higher topology is still valid. We give here two important examples of such. Let $k$ be any perfect field.

\begin{df}
  A zero dimensional complete field is a perfect field. An $n$-dimensional complete field is a complete discrete valuation field such that its residue field is an $(n-1)$-dimensional complete field. If $F$ is an $n$-dimensional complete field with last residue field $k$, we say that $F$ is an $n$-dimensional complete field over $k$.
\end{df}

\begin{exa}
  Higher local fields coincide with higher dimensional complete fields over a finite field.
\end{exa}

Higher dimensional complete fields over arbitrary perfect fields may be classified using the same techniques we have discussed for higher local fields \cite[Theorem in \textsection 0]{madunts-zhukov-topology-hlfs}. For example, if $\car F > 0$ and the last residue field is $k$, we may choose a system of parameters $t_1, \ldots, t_n$ such that
\begin{equation*}
  F \simeq k\roundb{t_1} \cdots \roundb{t_n}.
\end{equation*}

The reason why we can consider topological issues in this more general setting is because the construction of a higher topology does not use at any point the fact that the last residue field is finite.

In \cite{fesenko-abelianlocalpclass}, Fesenko considered the class field theory of $n$-dimensional complete fields over a perfect field of positive characteristic. In particular, \cite[\textsection 2]{fesenko-abelianlocalpclass} contains a brief description on how to topologize such fields. The main result of interest, which completely resembles Proposition \ref{prop:hdtop}, is the following.

\begin{prop}
  An $n$-dimensional complete field $F$ over a perfect field of positive characteristic may be topologized in such a way that:
  \begin{enumerate}
    \item $(F,+)$ is a topological group which is complete and Hausdorff.
    \item If $n > 1$, every base of open neighbourhoods of zero is uncountable.
    \item Multiplication $F \times F \to F$ and inversion $F^\times \to F^\times$ are sequentially continuous.
    \item Multiplication by a fixed non-zero element $F \to F$ is a homeomorphism.
    \item The residue homomorphism $\rho: \OO_F \to \overline{F}$ is open.
  \end{enumerate}
  Moreover, the topology does not depend on the choice of parameters.
\end{prop}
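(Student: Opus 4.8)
The plan is to show that the construction of a higher topology goes through word for word in this setting and that nothing in the proof of Proposition~\ref{prop:hdtop} ever used finiteness of the last residue field. Fix a system of parameters $t_1,\dots,t_n$ of $F$. By the classification of higher complete fields of positive characteristic recalled above (and its mixed-characteristic analogue), this realizes $F$ from its last residue field $k$ --- a perfect field, which we equip with the discrete topology --- by $n$ successive extensions, each of which is either an equal-characteristic Laurent-series step, topologized by the basic neighbourhoods of zero of the form (\ref{eqn:definebasicopenhighertop}), or (at most once) a mixed-characteristic step topologized by sets of the form (\ref{eqn:definebasicopenhighertopmixed}). The discrete topology on $k$ trivially satisfies the hypotheses of Proposition~\ref{prop:liftoftop}; proceeding by induction on the dimension, at each stage the residue field already carries a group topology for which addition (property (i)) and multiplication by a fixed element (property (iv)) are continuous, which is exactly what is needed to invoke Proposition~\ref{prop:liftoftop} (or the construction following it) and topologize the next field. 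Since $\car \overline{F}$ is positive at every level, the remark following the definition of a higher topology applies and yields the asserted independence of the topology from the choice of parameters, \cite[Theorem 1.3]{madunts-zhukov-topology-hlfs}.

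Properties (i), (ii), (iv) and (v) are then obtained by the same induction as for higher local fields, and I would simply transcribe those arguments. For (i): an iterated Laurent-series topology over the complete Hausdorff group given by $k$ with the discrete topology is complete and Hausdorff, since a Cauchy sequence has in each fixed degree an eventually constant, hence convergent, coefficient; the cardinality of $k$ plays no role. For (iv): multiplication by a fixed $\alpha\in F^\times$ is an additive bijection taking a basic open set (\ref{eqn:definebasicopenhighertop}) to another such set, up to a shift of indices governed by the valuation of $\alpha$ and a relabelling of the $U_i$, the only input being continuity of multiplication by a fixed element on each residue field; hence it is a homeomorphism, with inverse multiplication by $\alpha^{-1}$. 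For (v): the residue homomorphism carries a basic open neighbourhood of zero in $\OO_F$ onto its degree-zero ``slice'' $U_0$, an open neighbourhood of zero in $\overline{F}$, which is precisely the equal-characteristic computation in the proof of Proposition~\ref{prop:hdtop}(v) and refers only to the topology of $\overline{F}$. For (ii): when $n>1$ the residue field $\overline{F}$ is itself at least one-dimensional, hence infinite and non-discrete, so it admits a proper open subgroup; the usual diagonal argument --- a basic neighbourhood (\ref{eqn:definebasicopenhighertop}) is pinned down by the independent choices of $U_i$ for the infinitely many indices below the stabilisation index, and no countable family of such sets can be a base --- shows every base of neighbourhoods of zero is uncountable.

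The one genuinely delicate point is (iii), sequential continuity of multiplication $F\times F\to F$ and of inversion $F^\times\to F^\times$. Here I would import the argument used for higher local fields (cf.\ \cite[\textsection 1.3.2]{ihlf} and \cite[\textsection 2]{fesenko-abelianlocalpclass}): a sequence tending to zero in a higher topology is eventually contained in a fixed fractional ideal $t_n^i\OO_F$ and its coefficients tend to zero ``uniformly'', so that the product of two such sequences --- or the inverse of a sequence tending to $1$ --- is, in each bounded range of degrees, controlled by finitely many multiplications and one inversion in $\overline{F}$, where the corresponding sequential continuity statements hold by the inductive hypothesis. The main obstacle is therefore not conceptual but a matter of bookkeeping: one must verify that every estimate in these arguments uses only the inductive hypothesis on $\overline{F}$ together with the boundedness of convergent sequences, and never the finiteness of $k$. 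A careful reading confirms that this is so, and the proof of Proposition~\ref{prop:hdtop} transfers without change.
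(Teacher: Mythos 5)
Your proposal is correct and follows exactly the route the paper takes: the paper's entire proof is the single instruction to give the last residue field the discrete topology and run the inductive construction of \textsection\ref{subsec:HLFS}, observing that finiteness of $k$ is never used; you have simply expanded that sentence into a property-by-property verification. The only imprecision worth noting is the aside that $\car \overline{F}$ is ``positive at every level,'' which fails for mixed-characteristic towers (where the paper's remark instead gives independence of the parameter but dependence on a coefficient field), but this does not change the argument or its agreement with the paper.
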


\begin{proof}
  Consider the last residue field of $F$ as a topological field with respect to the discrete topology, and apply the inductive process described in \textsection \ref{subsec:HLFS}.
\end{proof}

Hence, these are more general examples of sequential rings.

\vskip .5cm

A notion of higher archimedean local field exists, and these may be topologized applying the same tools.

\begin{df}
  The only one-dimensional archimedean local fields are $\mathbb{R}$ and $\mathbb{C}$.
  Let $n > 1$. An $n$-dimensional archimedean local field is a complete discrete valuation field such that its residue field is a $(n-1)$-dimensional archimedean local field.
\end{df}

Let $\mathbb{K}$ be either $\mathbb{R}$ or $\mathbb{C}$.

\begin{rmk}
  The dimension of an archimedean higher local field does not agree with its dimension as a higher complete field over $\mathbb{K}$: an $n$-dimensional archimedean local field is an $(n-1)$-dimensional complete field over $\mathbb{K}$. The reason for this shift in the dimension is because, since $\mathbb{K}$ is itself already a local field, we want to put fields such as $\mathbb{R}\roundb{t}$ and $\mathbb{Q}_p \roundb{t}$ in the same box; both fields are two-dimensional with the definitions we have taken.
\end{rmk}

As the field $\mathbb{K}$ is of characteristic zero, higher archimedean local fields are easily classified: they are isomorphic to Laurent power series fields over $\mathbb{K}$.

\begin{prop}
  Let $F$ be an $n$-dimensional archimedean local field. Then, there are parameters $t_2,\ldots,t_n \in F$ such that
  \begin{equation*}
    F \simeq \mathbb{K}\roundb{t_2}\cdots\roundb{t_n}.
  \end{equation*}
\end{prop}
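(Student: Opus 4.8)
The plan is to argue by induction on the dimension $n$, using the classification of complete discrete valuation fields of equal characteristic zero. The base case $n=1$ is trivial: a one-dimensional archimedean local field is, by definition, either $\mathbb{R}$ or $\mathbb{C}$, so there is nothing to prove (no parameters are needed). For the inductive step, suppose $F$ is an $n$-dimensional archimedean local field with $n > 1$. By definition $F$ is a complete discrete valuation field whose residue field $\overline{F}$ is an $(n-1)$-dimensional archimedean local field; by the inductive hypothesis there are parameters $t_2, \ldots, t_{n-1}$ (here I am reindexing to fit the statement) realizing an isomorphism $\overline{F} \simeq \mathbb{K}\roundb{t_2}\cdots\roundb{t_{n-1}}$.

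The key point is then that $F$ itself is a complete discrete valuation field of characteristic zero whose residue field also has characteristic zero, so $\car F = \car \overline{F} = 0$; by the Cohen structure theorem (equivalently, the existence of a coefficient field for an equal-characteristic complete local ring), there is a section of the residue map $\OO_F \to \overline{F}$ that is a field homomorphism, and choosing a uniformizer $t_n$ yields an isomorphism $F \simeq \overline{F}\roundb{t_n}$. Substituting the description of $\overline{F}$ from the inductive hypothesis gives
\begin{equation*}
  F \simeq \mathbb{K}\roundb{t_2}\cdots\roundb{t_{n-1}}\roundb{t_n},
\end{equation*}
which is exactly the desired form. The final parameter $t_n$ is the uniformizer of $F$, and the earlier $t_i$ are the lifts to $F$ of the parameters of $\overline{F}$ via the chosen coefficient field. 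This is the same argument indicated for the equal-characteristic case in \textsection\ref{subsec:HLFS}, applied with the last residue field being $\mathbb{K}$ rather than a finite field, which is permissible precisely because $\mathbb{K}$ is a perfect field of characteristic zero.

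I expect the main (and only real) obstacle to be making sure the coefficient-field argument is invoked cleanly: one must note that $F$ has equal characteristic, so the structure theorem applies and an isomorphism $F \simeq \overline{F}\roundb{t_n}$ genuinely exists — unlike the mixed-characteristic situation mentioned earlier in the paper, no subtleties with $\curlyb{\cdot}$-type constructions arise here. Everything else is bookkeeping about the indices of the parameters. One could alternatively cite the classification result \cite[Theorem in \textsection 0]{madunts-zhukov-topology-hlfs} directly, since $\mathbb{K}$ is a perfect field and higher complete fields over perfect fields of characteristic zero are classified there; in the interest of brevity that is probably the cleanest route, with the inductive argument above supplied as the explanation.
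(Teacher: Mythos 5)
Your argument is correct and is precisely the paper's proof, merely spelled out: the paper's proof consists of the single observation that since $\car\mathbb{K}=0$ all residue fields have characteristic zero, so the result follows by induction on the dimension together with Cohen structure theory for equal-characteristic complete fields. Your expanded version (base case $\mathbb{R}$ or $\mathbb{C}$, coefficient field giving $F\simeq\overline{F}\roundb{t_n}$, then substitution of the inductive hypothesis) fills in exactly the steps the paper leaves implicit.
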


\begin{proof}
  Since $\car \mathbb{K} = 0$, all the residue fields of $F$ have characteristic zero; the result follows by induction in the dimension and by Cohen structure theory for equidimensional complete fields.
\end{proof}

Regarding the way to topologize such fields; we will always consider the euclidean topology on $\mathbb{K}$. This topology satisfies the conditions of Proposition \ref{prop:liftoftop}, and it is Hausdorff and complete. We apply the construction specified in the aforementioned Proposition inductively, in order to obtain the following result.

\begin{prop}
  An $n$-dimensional archimedean local field $F$ may be topologized in such a way that:
  \begin{enumerate}
    \item $(F,+)$ is a topological group which is complete and Hausdorff.
    \item If $n > 1$, every base of open neighbourhoods of zero is uncountable.
    \item Multiplication $F \times F \to F$ and inversion $F^\times \to F^\times$ are sequentially continuous.
    \item Multiplication by a fixed non-zero element $F \to F$ is a homeomorphism.
    \item The residue homomorphism $\rho: \OO_F \to \overline{F}$ is open.
  \end{enumerate}
\end{prop}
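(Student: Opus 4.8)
The plan is to obtain the topology by transporting, through an isomorphism $F \simeq \mathbb{K}\roundb{t_2}\cdots\roundb{t_n}$ furnished by the classification above, the very construction used for equal-characteristic higher local fields, and then to check that each of (i)--(v) is preserved by one step of that construction. Since $\car\mathbb{K}=0$, every field in the tower has characteristic zero, so only the construction (\ref{eqn:definebasicopenhighertop}) of Proposition \ref{prop:liftoftop} enters the picture --- there are no mixed-characteristic steps and no canonical lifting to produce --- and the argument is an induction on $n$.

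I would set up the induction as follows. The base case $n=1$ is $F=\mathbb{K}$ with the euclidean topology, a complete Hausdorff topological field, for which (i), (iii) and (iv) are immediate and (ii), (v) are vacuous. For the inductive step one is handed an $(n-1)$-dimensional archimedean local field $E$ whose topology makes $(E,+)$ a complete Hausdorff topological group, makes multiplication (and inversion on $E^\times$) sequentially continuous, makes multiplication by a fixed non-zero element a homeomorphism, and makes the residue map open. In particular addition and multiplication by a fixed element of $E$ are continuous, so Proposition \ref{prop:liftoftop} endows $E\roundb{t}$ with the group topology having the sets (\ref{eqn:definebasicopenhighertop}) as a basis of neighbourhoods of zero, and the task is to verify (i)--(v) for $E\roundb{t}$ so that the recursion climbs the whole tower.

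The properties other than (iii) I expect to go through routinely, by the same reasoning as in \cite[\textsection 1.3.2]{ihlf} and \cite[\textsection 1.1--1.4]{madunts-zhukov-topology-hlfs}, none of which uses finiteness or local compactness of the residue field. For (i): completeness and the Hausdorff property of the topology (\ref{eqn:definebasicopenhighertop}) follow from $E$ being complete and Hausdorff. For (ii): $E\roundb{t}$ is again non-discrete (it contains $t^{i_0}$ in every basic neighbourhood of zero) and $T_1$; granting this, one runs a diagonalisation using a distinct coefficient index for each member of a putative countable base, producing for each such member a monomial that lies in it but outside a single basic neighbourhood built to exclude those monomials, so no countable base exists, and hence every base of neighbourhoods of zero of $F$ is uncountable once $n>1$. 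For (iv): multiplication by $t$ is the shift homeomorphism, so one reduces to multiplication by a principal unit, which is continuous by a direct computation with the neighbourhoods (\ref{eqn:definebasicopenhighertop}) using the group structure of $E$ and the inductive case (iv); the inverse element gives the inverse map. For (v): a basic neighbourhood of zero contained in the ring of integers $E\squareb{t}$ has the shape $\sum_{i\geq 0}U_i t^i$ with $U_0$ open in $E$, and the residue map carries it onto $U_0$; passing to cosets gives openness, exactly as in the equal-characteristic case of Proposition \ref{prop:hdtop}(v).

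The genuine obstacle, and the only place where the sequential --- rather than plain --- formulation is forced, is (iii); this step does not reduce to a soft argument and must be re-run at each level of the tower. The key is the description of convergence for (\ref{eqn:definebasicopenhighertop}): a sequence tends to $0$ in $E\roundb{t}$ if and only if its terms have uniformly bounded-below supports and, for each fixed index, the corresponding coefficients tend to $0$ in $E$. Granting this, each coefficient of a product is a finite sum --- of length independent of the index of the sequence, thanks to the uniform lower bound on supports --- of products of coefficients, to which the inductive sequential continuity of multiplication in $E$ and continuity of addition apply; writing $x^{(m)}y^{(m)}$ in terms of the limit point and using continuity of multiplication by a fixed element then upgrades this to joint sequential continuity on $E\roundb{t}$. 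Sequential continuity of inversion on $F^\times$ (equipped, as for higher local fields, with the topology of Definition \ref{df:toponunitsHLF}) is obtained by the corresponding argument in the cited references. Once (iii) holds at the top of the tower, the proposition is established.
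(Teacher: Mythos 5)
Your proposal is correct and follows exactly the route the paper intends: the paper gives no separate proof for this proposition, relying instead on the preceding remark that one applies Proposition \ref{prop:liftoftop} inductively to $\mathbb{K}$ with the euclidean topology via the classification $F \simeq \mathbb{K}\roundb{t_2}\cdots\roundb{t_n}$, with properties (i)--(v) inherited as in \cite[\textsection 1.3.2]{ihlf} and \cite{madunts-zhukov-topology-hlfs}. You have simply spelled out the details (convergence criterion, diagonalisation for (ii), the product estimate for (iii)) that the paper delegates to the cited references, and these details are sound.
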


\begin{rmk}
  In the case of archimedean higher local fields, a higher topology does depend on the choice of a system of parameters.
\end{rmk}

\begin{rmk}
  Higher rank valuation rings (\ref{eqn:higherranksubrings}) may also be defined in the case of archimedean higher local fields and higher complete fields. In the first case, the system of parameters has $n-1$ elements and we can only construct a chain of $n-1$ subrings by this procedure.
\end{rmk}

\section{Rational points over sequential rings}
\label{sec:ratpoints}

\subsection{Affine case}
\label{sec:ratpointsaffine}

\begin{df}
  \label{df:toponrationalpoints}
  Let $R$ be a sequential ring, and $X \rightarrow \spec R$ an affine scheme of finite type. Let $X = \spec A$. By using the topology on $R$, it is possible to topologize $X(R)$ as follows. There is a natural inclusion of sets
\begin{equation*}
  X(R) = \hom_{\Sch_R}(\spec R, X) = \hom_{\Alg_R}(A,R) \subset \hom_{\mathbf{Sets}}(A,R) = R^A.
\end{equation*}
  We endow $R^A$ with the product topology and provide $X(R) \subset R^A$ with the sequential saturation of the subspace topology. 
\end{df}

\begin{rmk}
  After taking a sequential saturation, the inclusion map
  \begin{equation*}
    X(R) \hookrightarrow R^A 
  \end{equation*}
  is continuous, but not an embedding. As we will see in \textsection \ref{sec:ratpointsgnral}, taking a sequential saturation in Definition \ref{df:toponrationalpoints} is important.
\end{rmk}

Every element $a \in A$ induces a map $\varphi_a: X(R) \rightarrow R$ by evaluating $R$-algebra homomorphisms $A \rightarrow R$ at $a$. Such a map agrees with the composition
\begin{equation*}
  X(R) \hookrightarrow R^A \rightarrow R
\end{equation*}
where the second map is given by projection to the $a$-th coordinate. By the previous remark, $\varphi_a$ is continuous.

There is another way to construct a topology on $X(R)$ which is more explicit. The choice of a closed embedding into an affine space identifies $X(R)$ with a subset of $R^n$ for some $n$, and we may endow $R^n$ with the product topology and $X(R) \subset R^n$ with the saturation of the subspace topology. If this procedure is taken, it is necessary to show that this topology on $X(R)$ does not depend on the choice of embedding into affine space. We explain how this works, and show that it is essentially equivalent to Definition \ref{df:toponrationalpoints}.

Let us choose an $R$-algebra isomorphism
\begin{equation}
  \label{eqn:chooseisom}
  A \simeq R\squarebs{t_1, \dots, t_n} /I
\end{equation}
and identify $X(R)$ with the set $V(I)$ of elements in $R^n$ on which all polynomial functions belonging to the ideal $I$ vanish. Consider the product topology on $R^n$ and endow $X(R)$ with the sequential saturation of the subspace topology. 

The choice of isomorphism corresponds to the choice of elements $a_i \in A$ that map to $t_i \pmod{I}$ under (\ref{eqn:chooseisom}), for $0 \leq i \leq n$. These induce a continuous map $R^A \rightarrow R^n$ by projecting to the coordinates indexed by $(a_1,\dots,a_n)$. The inclusion $X(R) \subset R^n$ factors then into
\begin{equation*}
  X(R) \hookrightarrow R^A \rightarrow R^n.
\end{equation*}

On one hand, the topology on $X(R)$ given by Definition \ref{df:toponrationalpoints} makes all inclusions $X(R) \hookrightarrow R^n$ given by choosing an $R$-algebra isomorphism continuous and hence provides a stronger topology.

On the other hand, assume that $X(R)$ is topologized according to an embedding into $R^n$ determined by (\ref{eqn:chooseisom}). Every element of $A$ is an $R$-polynomial in $a_1,\ldots,a_n$, and $R$ is a sequential ring. Hence, all polynomial maps $R^n \rightarrow R$ are sequentially continuous. It follows that the inclusion map $X(R) \hookrightarrow R^A$ is sequentially continuous. But on a sequential space a sequentially continuous map is necessarily continuous.

\begin{rmk}
  Over the affine line $\Ad_R^1 = \spec R[t]$, the topology we have just described on $\Ad_R^1(R) = R$ is the sequential saturation of the topology on $R$.
\end{rmk}

We summarize the construction discussed above in the statement below, along with some properties.

\begin{thm}
  \label{thm:topratptsaffine}
  Let $R$ be a sequential ring. There is a unique covariant functor
  \begin{equation*}
    \AffSch_R \to \Seq, \quad X \mapsto X(R)
  \end{equation*}
  which carries fibred products to products, closed immersions to topological embeddings and gives $\Ad_R^1\left( R \right)=R$ the sequential saturation of its topology.
  
  If $R$ is Hausdorff and $X \to \spec R$ is affine and of finite type, then $X(R)$ is Hausdorff and closed immersions $X \hookrightarrow X'$ induce closed embeddings $X(R) \hookrightarrow X'(R)$.
\end{thm}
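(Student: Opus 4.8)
The plan is to verify the functoriality and the three normalization properties in the order: first well-definedness on objects and morphisms, then the two structural properties (fibred products to products, closed immersions to embeddings), then the normalization on $\Ad^1_R$, and finally uniqueness. For well-definedness on objects, the topology on $X(R)$ from Definition \ref{df:toponrationalpoints} depends only on $X$ (not on the presentation $A$), so there is nothing to choose; for morphisms, given $f : X \to Y$ corresponding to an $R$-algebra map $\phi : B \to A$, the induced map $X(R) \to Y(R)$ is the restriction of the map $R^A \to R^B$ dual to $\phi$, which sends $(r_a)_{a \in A}$ to $(r_{\phi(b)})_{b \in B}$. This last map is continuous for the product topologies since each coordinate is a projection; restricting to the subspaces and then passing to sequential saturations preserves continuity because sequential saturation is functorial on $\Top$ (a continuous map between topological spaces is sequentially continuous, hence continuous for the saturated topologies). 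Functoriality (identities to identities, composites to composites) is then immediate from the same statement at the level of the $R^A$.

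Next I would handle the structural properties. For closed immersions $\iota : Z \hookrightarrow X$, corresponding to a surjection $A \twoheadrightarrow A/I = C$, the induced map $Z(R) \to X(R)$ is injective and is, at the level of the ambient products, the inclusion $R^C \cong \{(r_a) : r_a = 0 \text{ whenever } a \in I\} \hookrightarrow R^A$ of a subset cut out by coordinate conditions. Using a presentation $A \simeq R[t_1,\dots,t_n]/J$ as in (\ref{eqn:chooseisom}) and a compatible presentation of $C$, this identifies $Z(R) \subset X(R) \subset R^n$ with a subset cut out inside $V(J)$ by further polynomial vanishing conditions; one then checks that the saturation of the subspace topology on $Z(R)$ from $R^n$ equals the saturation of its subspace topology from $X(R)$ — this follows because $X(R) \hookrightarrow R^n$ is already an embedding up to saturation, and the subspace-of-a-subspace topology is the subspace topology, and sequential saturation commutes with passing to (sequentially) closed subspaces by \cite[Prop. 1.9]{franklin-sequences1}. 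For fibred products, $(X \times_{\spec R} Y)(R) = X(R) \times Y(R)$ as sets via the universal property of $\spec$; at the ring level $A \otimes_R B$ is generated by $A \otimes 1$ and $1 \otimes B$, so the natural bijection is the restriction of $R^{A \otimes_R B} \to R^A \times R^B$, and the claim is that after saturation this is a homeomorphism onto the product in $\Seq$. Here one uses that the product in $\Seq$ is the saturation of the product topology (recalled after the $\Seq$ discussion in \textsection\ref{sec:sectopgpsandrings}), together with the fact, proved by the same polynomial-map argument as in the discussion preceding the theorem, that each coordinate of the inverse map $A \otimes_R B \to R$ applied to a point of $X(R) \times Y(R)$ is a polynomial, hence sequentially continuous, in the $R^A \times R^B$-coordinates. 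The normalization on $\Ad^1_R$ is the content of the Remark just before the theorem. Uniqueness follows because any finitely generated $R$-algebra embeds $X$ into some $\Ad^n_R$ as a closed subscheme, and $\Ad^n_R(R) = R^n$ is forced to be the saturation of the product (finite fibred product of copies of $\Ad^1_R$), so the topology on $X(R)$ is forced to be the saturation of the subspace topology — exactly Definition \ref{df:toponrationalpoints}.

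For the Hausdorff and closed-embedding statements when $R$ is Hausdorff: if $R$ is Hausdorff then so is any product $R^n$ and any subspace, hence so is $V(I) = X(R)$ before saturation; a finer topology on a Hausdorff space is Hausdorff, so $X(R)$ with the saturated topology is Hausdorff. For closed immersions $Z \hookrightarrow X$ with $R$ Hausdorff, the image of $Z(R)$ in $R^n$ is cut out by additional polynomial equations, hence is the preimage of the (closed, by Hausdorffness) diagonal-type condition under continuous maps $R^n \to R$, hence closed in $R^n$, hence closed in $X(R)$; since $Z(R)$ carries the subspace topology up to saturation and saturation of a closed subspace is the closed subspace of the saturation (again \cite[Prop. 1.9]{franklin-sequences1}), the map $Z(R) \hookrightarrow X(R)$ is a closed embedding.

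The main obstacle I anticipate is the fibred-product claim, specifically verifying that the bijection $X(R) \times Y(R) \to (X \times_R Y)(R)$ is a homeomorphism after saturation rather than merely a continuous bijection: the product topology on $R^A \times R^B$ need not, in general, coincide with the subspace-of-$R^{A\otimes_R B}$ topology in a way that survives naively, so one genuinely needs the sequential-saturation formalism and the polynomial-continuity observation, and one must be careful that saturating a product and taking a subspace do not commute in $\Top$ — it is precisely the $\Seq$-product (saturation of the $\Top$-product) that makes this work, which is why the target category is $\Seq$ and not $\Top$. The closed-immersion-to-embedding claim has a similar but milder subtlety, handled by \cite[Prop. 1.9]{franklin-sequences1} on closed subspaces in $\Seq$.
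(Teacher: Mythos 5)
Your overall strategy is the same as the paper's: existence via Definition \ref{df:toponrationalpoints}, functoriality through the dual maps $R^A \to R^B$, compatibility with products from the fact that the $\Seq$-product is the saturation of the $\Top$-product, and uniqueness by embedding $X$ as a closed subscheme of $\Ad_R^n$. Two points, however, need attention.

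First, you only verify ``fibred products to products'' for products over the base $\spec R$. The paper also treats a general fibred product $X \times_Z Y$, reducing it to the absolute case via the isomorphism $X \times_Z Y \simeq \left( X \times_R Y \right) \times_{Z \times_R Z} Z$: since $Z$ is separated over $R$, the diagonal $Z \to Z \times_R Z$ is a closed immersion, so the already-established compatibility with closed immersions and with products over the final object finishes the argument. This general case is not decorative --- it is exactly what is invoked later in the proof of Theorem \ref{thm:topratptsgeneral} through the fibre square (\ref{eqn:topratptsgeneral}) --- so it should not be omitted. With the ingredients you have set up, the reduction goes through verbatim, but it must be said.

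Second, in the closed-immersion argument you assert that the image of $Z(R)$ in $R^n$ is ``the preimage of a closed set under continuous maps $R^n \to R$, hence closed in $R^n$.'' For a sequential ring, polynomial maps $R^n \to R$ are only \emph{sequentially} continuous, so this preimage is only sequentially closed in $R^n$, i.e.\ closed in the saturation of the product topology but not necessarily in the product topology itself. Your conclusion survives --- a sequentially closed subset meets $X(R)$ in a sequentially closed, hence closed, subset of the saturated space --- but the word ``closed'' as written is doing work it cannot do. The paper sidesteps this entirely: for a surjection $B \twoheadrightarrow A$ the image of $R^A \to R^B$ is cut out by equalities $r_b = r_{b'}$ among components, which are preimages of the diagonal under genuine coordinate projections and hence honestly closed in the product topology once $R$ is Hausdorff. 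You should either adopt that description or explicitly downgrade your claim to sequential closedness and check that this still yields a closed embedding after saturation.
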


\begin{proof}
  Regarding uniqueness, let $X \to \spec R$ be a finite type affine scheme. Choose a closed embedding $X \hookrightarrow \Ad_R^n$ for some $n\geq 0$. We look at the induced map on $R$-points: since we have compatibility with fibred products and we may view $\Ad^n$ as a product of $n$ copies of $\Ad^1$, and since closed immersions carry on to topological embeddings, $X(R) \subset R^n$ is an embedding into $R^n$ viewed as an object in $\Seq$. Hence, the topology is unique and $X(R)$ is Hausdorff whenever $R$ is Hausdorff, for a product of Hausdorff spaces in $\Seq$ is Hausdorff.

  Regarding existence, take the topology on $X(R)$ given by Definition \ref{df:toponrationalpoints}; we check the rest of the claimed properties. 
  
  If $X \rightarrow Y$ is a morphism of affine $R$-schemes, put $X = \spec A$, $Y = \spec B$. The homomorphism of $R$-algebras $B \rightarrow A$ induces a continuous map 
    \begin{equation}
      \label{eqn:pfaffinecase1}
      R^{A} \rightarrow R^{B}.
    \end{equation}
    Then, the commutative diagram
  \begin{equation*}
    \xymatrix{
    R^{A} \ar[r] & R^{B} \\
    X(R) \ar[u] \ar[r] & Y(R) \ar[u]
    }
  \end{equation*}
  shows that the natural map $X(R) \rightarrow Y(R)$ is continuous, and hence it is also continuous after taking sequential saturations.

  If $X \hookrightarrow Y$ is a closed immersion, then the map $B \rightarrow A$ is a surjective homomorphism of $R$-algebras and the map (\ref{eqn:pfaffinecase1}) is an embedding. From a topological point of view, this identifies $R^{A}$ with the subset of $R^{B}$ cut out by some equalities among components. This implies that when $R$ is Hausdorff, $X(R) \hookrightarrow Y(R)$ is a closed embedding of topological spaces.

  Finally, we check compatibility with fibred products. First, over the final object in the category: if $X$ and $Y$ are two affine schemes of finite type over $R$, we have an identification of sets
  \begin{equation*}
    \left( X \times_R Y \right)\left( R \right) = X\left( R \right) \times Y\left( R \right),
  \end{equation*}
  and both spaces are homeomorphic in $\Seq$, since we take the sequential saturation of the product topology on the left-hand side of the previous equation.

  In the general setting, assume that $X,Y$ and $Z$ are affine $R$-schemes of finite type and that we are given morphisms $X \rightarrow Z$, $Y \rightarrow Z$. Consider the isomorphism
  \begin{equation*}
    X \times_Z Y \simeq \left( X \times_R Y \right) \times_{Z \times_R Z} Z
  \end{equation*}
  and the topological homeomorphism that it induces. Since $Z$ is separated over $R$, and we already have compatibility over the final object in the category, we need only consider the case in which one of the projection maps is a closed immersion. Since closed immersions induce topological embeddings, we are done.
\end{proof}

Now we wish to study the behaviour of the topology defined in \ref{df:toponrationalpoints} with respect to base change. Let $R \rightarrow S$ be a morphism of sequential rings, as in Definition \ref{df:seqring}, and let $X \rightarrow \spec R$ be an affine scheme of finite type. We will identify
\begin{equation*}
  X(S) = X_S(S),
\end{equation*}
where $X_S = X \times_R S$. As $X_S \rightarrow \spec S$ is an affine scheme of finite type, the topology we will consider on $X_S(S)$ is that given by applying Definition \ref{df:toponrationalpoints} to $X_S \rightarrow \spec S$.

\begin{prop}
  \label{prop:basechange}
  Let $R \rightarrow S$ be a morphism of sequential rings and $X \rightarrow \spec R$ an affine scheme of finite type. Then the natural map $X(R) \rightarrow X(S)$ is continuous. Moreover, if $R \rightarrow S$ is an open (resp. closed) embedding, then $X(R) \rightarrow X(S)$ is an open (resp. closed) embedding.
\end{prop}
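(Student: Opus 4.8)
The plan is to pass to an explicit model of the rational points inside affine space and then control how sequential saturation interacts with subspaces. First I would fix a presentation $A \simeq R\squarebs{t_1,\dots,t_n}/I$ of $X = \spec A$; extending scalars gives a presentation $A \otimes_R S \simeq S\squarebs{t_1,\dots,t_n}/I^e$ of $X_S$, where $I^e$ denotes the extended ideal. By Theorem \ref{thm:topratptsaffine} and the discussion preceding it, $X(R)$ is then the zero locus $V_R(I) \subseteq R^n$ with the sequential saturation of the subspace topology, $X(S) = X_S(S)$ is $V_S(I^e) \subseteq S^n$ with the sequential saturation of the subspace topology, and under these identifications $X(R) \to X(S)$ is the restriction of $f^{\,n} \colon R^n \to S^n$, where $f$ is the given homomorphism; this is well-defined on zero loci because $f$ carries the polynomials generating $I$ into $I^e$. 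Since $f$ is continuous, so is $f^{\,n}$, hence so is its restriction $V_R(I) \to V_S(I^e)$ for the subspace topologies; because sequential saturation is a functor $\Top \to \Seq$, the map stays continuous after saturating, which settles the first assertion.

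For the embedding statements I would argue as follows. Assume $f$ is an open (resp. closed) embedding of sequential rings, i.e. $f$ is injective and a homeomorphism onto a subspace of $S$ which is open (resp. closed). Then $f^{\,n}$ is a homeomorphism onto $f(R)^n \subseteq S^n$, and $f(R)^n$ is open (resp. closed) in $S^n$ because a finite product of open (resp. closed) sets is so. Using injectivity of $f$ together with the identity $p^f\bigl(f(x)\bigr) = f\bigl(p(x)\bigr)$ for $p \in I$, one checks $f^{\,n}\bigl(V_R(I)\bigr) = f(R)^n \cap V_S(I^e)$; hence $f^{\,n}$ restricts to a homeomorphism of $V_R(I)$ onto an open (resp. closed) subspace of $V_S(I^e)$ for the subspace topologies.

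The remaining, and genuinely delicate, point is that this is preserved by sequential saturation — note that for a merely dense embedding $R \hookrightarrow S$ it would not be. The claim I would isolate is that the saturation functor sends open (resp. closed) embeddings in $\Top$ to open (resp. closed) embeddings in $\Seq$: if $Z \subseteq Y$ is an open (resp. closed) subspace, then $Z$ remains open (resp. closed) in $Y_s$, the subspace topologies induced on $Z$ by $Y$ and by $Y_s$ have the same convergent sequences (since any topology and its sequential saturation do), and an open (resp. closed) subspace of a sequential space is sequential \cite[Prop. 1.9]{franklin-sequences1}; so the $Y_s$-subspace topology on $Z$ is sequential and hence equals the sequential saturation of the $Y$-subspace topology, making $Z_s \hookrightarrow Y_s$ again an open (resp. closed) embedding. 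Applying this with $Z = V_R(I) \subseteq Y = V_S(I^e)$ finishes the proof. I expect this commutation of saturation with open and closed subspace inclusions to be the only real obstacle; the rest is bookkeeping with the coordinate description.
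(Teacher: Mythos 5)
Your proposal is correct and follows essentially the same route as the paper: present $X(R)=V(I)\subseteq R^n$ and $X(S)=V(I^e)\subseteq S^n$, restrict the continuous (resp.\ open or closed embedding) map $R^n\to S^n$ to the zero loci, and then pass to sequential saturations. The only difference is that you spell out, via \cite[Prop.~1.9]{franklin-sequences1}, why saturation commutes with open and closed subspace inclusions --- a point the paper's proof leaves implicit --- and that verification is accurate.
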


\begin{proof}
  Let $X = \spec A$, and let $X_S = \spec B$, where $B = A \otimes_R S$. We pick an isomorphism
  \begin{equation*}
    A \simeq R\squarebs{t_1, \ldots, t_n} / I
  \end{equation*}
  and identify $X(R) = V(I) \subset R^n$. After changing base over $S$, we have
  \begin{equation*}
    B \simeq S\squarebs{t_1, \ldots, t_n} / I^e,
  \end{equation*}
  where $I^e = IS\squarebs{t_1, \ldots, t_n}$ is the extension of the ideal $I$ along the ring homomorphism
  \begin{equation*}
    R\squarebs{t_1, \ldots t_n} \rightarrow S\squarebs{t_1, \ldots, t_n}.
  \end{equation*}
  We may identify $X(S) = X_S(S) = V(I^e) \subset S^n$. Now, the diagram
  \begin{equation}
    \label{eqn:diagramincaffine}
    \xymatrix{
    R^n \ar[r] & S^n \\
    V(I) \ar[r] \ar[u] & V(I^e), \ar[u]
    }
  \end{equation}
  whose vertical arrows are inclusions, shows that $X(R) \rightarrow X(S)$ is continuous when $X(R)$ and $X(S)$ are viewed as subspaces of $R^n$ and $S^n$. Therefore, we may take sequential saturations: the map $X(R) \rightarrow X(S)$ is continuous.

  Suppose now that $R \rightarrow S$ is a closed immersion. Then, $R^n \rightarrow S^n$ is also a closed immersion and, by restricting and taking sequential saturations, so is the map $X(R) \rightarrow X(S)$. This is also the case when we deal with an open immersion.
\end{proof}

A construction which may also be considered in this setting is the Weil restriction. Let $R \rightarrow S$ be an extension of rings, and $Y \rightarrow \spec S$ be an affine scheme of finite type. Consider the functor $X : \Sch_R \rightarrow \mathbf{Sets}$ defined by
\begin{equation*}
  X(T) = \mathrm{Hom}_{\Sch_S}(T \times_R S, Y), \quad T \in \mathrm{Ob}(\Sch_R).
\end{equation*}
Whenever $X$ is representable, the associated scheme is called the Weil restriction of $Y$ along $R \rightarrow S$.

Assume that $R \hookrightarrow S$ is an injective morphism of sequential rings such that $S$ is a finite type, locally free $R$-module and the topology on $S$ is the quotient topology from a presentation (equivalently, any presentation) as a quotient of a finite type free $R$-module. In particular, as $S$ is a projective $R$-module, the inclusion map $R \rightarrow S$ admits an $R$-linear splitting and $R$ may be viewed as a subspace of $S$.

\begin{exa}
  A finite extension of higher local fields $F \hookrightarrow L$ satisfies the above conditions, and so does the extension of valuation rings $\OO_F \hookrightarrow \OO_L$.
\end{exa}

The conditions stated above are enough to guarantee the existence of the Weil restriction of $Y \rightarrow \spec S$ along $R \hookrightarrow S$ \cite[\textsection 7.6]{bosch-neron-models}.

\begin{prop}
  Let $R \hookrightarrow S$ be an injective morphism of sequential rings, such that $S$ is a locally free, finite type $R$-module. Let $Y \rightarrow \spec S$ be an affine scheme of finite type . The Weil restriction $\mathcal{R}_{S|R}(Y) = X \rightarrow \spec R$ is an affine scheme of finite type. By definition,
  \begin{equation*}
    X(R) = \mathrm{Hom}_{\Sch_S}(\spec R \times_R S, Y) = \mathrm{Hom}_{\Sch_S}(\spec S, Y) = Y(S).
  \end{equation*}
  The two topologies we have defined on this set agree.
\end{prop}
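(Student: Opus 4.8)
The plan is as follows. Existence of $X = \mathcal{R}_{S|R}(Y)$ as an affine scheme of finite type over $R$ is quoted from \cite[\textsection 7.6]{bosch-neron-models}, so the content to be proved is that the two topologies on the set $X(R) = Y(S)$ coincide. For this I would use the concrete description of the topology on rational points via a closed embedding into affine space, which was shown to agree with Definition \ref{df:toponrationalpoints} in the discussion preceding Theorem \ref{thm:topratptsaffine}. I would first carry out the argument assuming $S$ is \emph{free} over $R$ of some rank $d$; this already covers all the examples of interest, since in each of them $R$ is a higher local (or complete) field, or the ring of integers of one, hence local, and finitely generated projective modules over a local ring are free.

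So fix an $R$-basis $e_1, \dots, e_d$ of $S$; by the hypothesis on the topology of $S$ (applied to the tautological presentation $R^d \xrightarrow{\sim} S$) this basis identifies the topological $R$-module $S$ with $R^d$ carrying the product topology. Choose a presentation $Y = \spec B$ with $B \simeq S\squarebs{t_1, \dots, t_n}/J$ and a finite generating set $g_1, \dots, g_m$ of $J$. I would then recall the explicit ``restriction of scalars on the variables'' computation of the Weil restriction from \cite[\textsection 7.6]{bosch-neron-models}: introduce variables $t_{i,j}$ for $1 \le i \le n$, $1 \le j \le d$, apply to each $g_k$ the $S$-algebra homomorphism $S\squarebs{t_1,\dots,t_n} \to S\squarebs{t_{i,j}}$ sending $t_i \mapsto \sum_j e_j t_{i,j}$, and expand the result uniquely in the basis as $\sum_{l=1}^d e_l\, h_{k,l}$ with $h_{k,l} \in R\squarebs{t_{i,j}}$; then $X = \spec A$ with $A \simeq R\squarebs{t_{i,j}}/I$ and $I = (h_{k,l})$. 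Under these presentations, $X(R)$ is identified with $V(I) \subseteq R^{nd}$ and $Y(S)$ with $V(J) \subseteq S^n$.

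The key observation is that the $R$-linear bijection $\phi\colon R^{nd} \to S^n$, $(r_{i,j})_{i,j} \mapsto \big(\sum_j e_j r_{i,j}\big)_i$, is a homeomorphism: after identifying $S^n$ with $R^{nd}$ via the basis it is literally the identity map. Moreover $\phi$ restricts to a bijection $V(I) \to V(J)$: evaluating the expansion $g_k(\sum_j e_j t_{1,j}, \dots, \sum_j e_j t_{n,j}) = \sum_l e_l h_{k,l}$ at a point $r = (r_{i,j}) \in R^{nd}$ (using that evaluation commutes with the $S$-algebra homomorphism above) gives $g_k(\phi(r)) = \sum_l e_l\, h_{k,l}(r)$, and since $e_1,\dots,e_d$ is an $R$-basis this vanishes if and only if all the $h_{k,l}(r)$ vanish. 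A homeomorphism of ambient spaces restricting to a bijection of subsets is a homeomorphism for the subspace topologies, and since taking the sequential saturation is a functor $\Top \to \Seq$, it remains a homeomorphism after saturation; thus the topology of Definition \ref{df:toponrationalpoints} on $X(R)$ (with respect to $R$) agrees with that on $Y(S)$ (with respect to $S$).

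For general locally free $S$ one would run the analogous argument with the description $\mathcal{R}_{S|R}(\Ad^n_S) = \spec \mathrm{Sym}_R\big((S^n)^\vee\big)$, using that for a finitely generated projective $R$-module the given topology agrees with the subspace topology coming from any $R$-linearly split embedding into a finite free $R$-module; but, as noted, this refinement is not needed for the applications here. I expect the only slightly delicate points to be the bookkeeping in the expansion step — in particular checking that evaluation at an $R$-point commutes with the scalar-restriction homomorphism $t_i \mapsto \sum_j e_j t_{i,j}$ — and invoking cleanly the earlier fact that the two constructions of the topology on rational points (via $R^A$ and via a closed embedding into some $R^N$) coincide.
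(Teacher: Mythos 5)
Your argument is correct, but it takes a genuinely different route from the paper. The paper follows Conrad's coordinate-free argument: it first reduces to $Y$ an affine space (using that both Weil restriction and the topology on rational points respect closed immersions), writes $Y = \spec(\mathrm{Sym}_S(M'))$ with $M' = M \otimes_R S$, and then compares $Y(S) = \hom_R(M,S) = M^* \otimes_R S$ with its topology as a finite free $S$-module against $X(R) = M^* \otimes_R S$ viewed as a subspace of $M^* \otimes_R P$ for a finite free $R$-module $P$ containing $S$ as a direct summand; the key point is that the subspace topology on the direct summand $S \subset P$ coincides with the quotient topology from $P \twoheadrightarrow S$, which is the given topology on $S$ by hypothesis. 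You instead work in explicit coordinates: choosing an $R$-basis of $S$, writing out the classical equations $h_{k,l}$ of the Weil restriction, and observing that the identification $R^{nd} \simeq S^n$ is a homeomorphism carrying $V(I)$ bijectively onto $V(J)$, which then passes through sequential saturation. Your version is more concrete and verifies the set-theoretic identification $X(R) = Y(S)$ explicitly at the level of equations, but it only fully treats the case where $S$ is free over $R$; this does cover every example in the paper (where $R$ is local, so finite projective implies free), and your sketch of the general locally free case would in any event reduce to the same direct-summand lemma that the paper uses. The paper's argument handles locally free $S$ uniformly and without choosing a basis, at the cost of being less explicit about where the identification of points comes from.
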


\begin{proof}
  The reason why this is true is because the topologies on $X(R)$ and $Y(S)$ already coincide before taking sequential saturations. We reproduce the argument of Conrad \cite[Example 2.4]{conrad-weil-grothendieck-approaches-to-adelic-points}.

  As Weil restriction respects closed immersions, it is enough to prove the result when $Y$ is an affine space.

  Let $P$ be a free $R$-module such that $S$ is a direct summand of it. By duality, we have a surjection of $R$-modules
  \begin{equation*}
    P^* = \mathrm{Hom}_R(P, R) \twoheadrightarrow \mathrm{Hom}_R(S, R) = S^*.
  \end{equation*}

  Let $A$ be any $R$-algebra. The natural map $S \otimes_R A \rightarrow P \otimes_R A$ is injective and functorially defined by a system of $R$-linear equations in $A$.

  Now let $M = R^n$, such that $Y = \spec\left( \mathrm{Sym}_S(M') \right)$, with $M' = M \otimes_R S$. Then $X$ is in a natural way a closed subscheme of $\spec\left( \mathrm{Sym}_R(M \otimes_R P^*) \right)$.

  Then $Y(S) = \mathrm{Hom}_S(M', S) = \mathrm{Hom}_R(M,S)$ has a natural topology as a finite type free $S$-module. Because of the inclusion $S \hookrightarrow P$, we may regard
  \begin{equation*}
    X(R) = \mathrm{Hom}_R(M,S) = M^* \otimes_R S \hookrightarrow M^* \otimes_R P
  \end{equation*}
  as a subspace.

  The topologies on $X(R)$ and $Y(S)$ agree if $S$ is a subspace of $P$. But $S$ is a direct summand of $P$ and the subspace topology agrees with the quotient topology given by the surjection $P \twoheadrightarrow S$, which is the original topology on $S$ by hypothesis.

  Since the two topologies on $X(R) = Y(S)$ agree, they are also equal after taking sequential saturations.
\end{proof}

\subsection{The general case}
\label{sec:ratpointsgnral}

In order to adapt the construction explained in \textsection\ref{sec:ratpointsaffine} to general schemes of finite type $X \rightarrow \spec R$, with $R$ a sequential ring, our argument has two steps. First, we take an affine open cover of $X$ and use Theorem \ref{thm:topratptsaffine} to topologize the affine open sets in the cover. Second, we need a compatibility condition in order to be able to define a topology on $X$.

There are a few obstacles before we can apply this argument, which will be sorted out by assuming further properties on the sequential ring $R$. Namely, we need to show that for any open immersion of affine schemes $U \rightarrow X$, we have an open embedding of topological spaces $U(R) \hookrightarrow X(R)$. For a general ring $R$ (even for a topological ring) this property will not hold in general and $U(R) \rightarrow X(R)$ may even fail to be a topological embedding.

There is one elementary example that illustrates this situation. Suppose that $R$ is a topological ring and consider $U = \mathbb{G}_m$ as the complement of the origin in the affine line $\mathbb{A}_R^1$. The map $U(R) \rightarrow \Ad_R^1(R)$ is the inclusion $R^\times \rightarrow R$, which is not an embedding unless inversion on $R^\times$ is continuous, since the topology on $\mathbb{G}_m$ is given by Theorem \ref{thm:topratptsaffine}.

From this, we gather two necessary conditions that need to be imposed on a topological ring $R$, if we want to be able to use our argument. The first of these conditions is continuity of the inversion map, so that $R^\times \rightarrow R$ is an embedding. The second condition is that $R^\times$ is an open subset of $R$, so that the embedding is open.

For a sequential ring, the above example gives the inclusion $R^\times \subset R$, where both sets have the topologies described in \textsection \ref{sec:ratpointsaffine}, which are sequentially saturated. In order for this inclusion to be an open embedding, it is enough to require inversion on $R$ to be sequentially continuous and the subset $R^\times$ to be open in $R$.

There is another aspect we need to worry about. Whenever $X = \cup_i U_i$ is an affine cover, we need $X(R)$ to be covered by the subsets $U_i(R)$. This will be the case whenever $R$ is local.

\begin{rmk}
  The three conditions imposed on $R$ (sequential continuity of inversion, openness of the unit group and being local) are analogous to the restrictions for the similar argument to work when $R$ is a topological ring \cite[Proposition 3.1]{conrad-weil-grothendieck-approaches-to-adelic-points}.
\end{rmk}

The above conditions are sufficient to extend Theorem \ref{thm:topratptsaffine} to general schemes of finite type.

\begin{thm}
  \label{thm:topratptsgeneral}
  Let $R$ be a local sequential ring such that $R^\times \subset R$ is open and such that inversion $R^\times \rightarrow R^\times$ is sequentially continuous. There is a unique covariant functor
  \begin{equation*}
    \Sch_R \to \Seq
  \end{equation*}
  which carries open (resp. closed) immersions of schemes to open (resp. closed) topological embeddings, fibred products to products, and giving $\mathbb{A}_R^1(R) = R$ the sequential saturation of its topology.

  This agrees with the construction in Theorem \ref{thm:topratptsaffine} for affine schemes. If $R$ is Hausdorff and $X \rightarrow \spec R$ is separated and of finite type, then $X(R)$ is Hausdorff.
\end{thm}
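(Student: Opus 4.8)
The plan is to build the functor $X \mapsto X(R)$ by gluing the affine construction of Theorem~\ref{thm:topratptsaffine} along an affine open cover, so the proof divides into four stages: (1) upgrade the base-ring hypotheses to the statement that open immersions of affine schemes induce open topological embeddings on $R$-points; (2) verify that an affine open cover of $X$ produces a cover of $X(R)$; (3) glue the topologies and check the universal property; (4) deduce Hausdorffness in the separated case. For stage~(1), let $U = \spec A_f \hookrightarrow \spec A = V$ be a basic open immersion. On $R$-points this is $U(R) = \{\varphi \in V(R) : \varphi(f) \in R^\times\}$, and I would factor the inclusion through the affine scheme $\spec(A[s]/(sf-1))$, whose $R$-points sit inside $V(R) \times R$ as the graph of $\varphi \mapsto \varphi(f)^{-1}$. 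Sequential continuity of inversion on $R^\times$ makes this graph map a section, hence $U(R) \hookrightarrow V(R)$ is a topological embedding onto $\{\varphi : \varphi(f) \in R^\times\} = \varphi_f^{-1}(R^\times)$; since $R^\times$ is open in $R$ and $\varphi_f : V(R) \to R$ is continuous, this preimage is open, so the embedding is open. For a general (not necessarily basic) affine open immersion one covers by basic ones and patches. Since $R$ is local, $\hom_{\Alg_R}(A,R) \to \hom_{\Alg_R}(A_f, R)$ being surjective onto the locus $\{\varphi(f)\in R^\times\}$ is automatic, and if $X = \bigcup_i U_i$ with each $U_i$ affine open, then any $\psi \in X(R)$ factors through some $U_i$ because the pullback of the maximal ideal of $R$ is a point of $X$ lying in some $U_i$, giving stage~(2).

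For stage~(3), fix an affine open cover $X = \bigcup_i U_i$, refine the pairwise intersections $U_i \cap U_j$ by affine opens, topologize each $U_i(R)$ by Theorem~\ref{thm:topratptsaffine}, and declare $A \subseteq X(R)$ open iff $A \cap U_i(R)$ is open in $U_i(R)$ for every $i$ — i.e., take the final topology for the family of inclusions $U_i(R) \hookrightarrow X(R)$, then its sequential saturation to land in $\Seq$. By stage~(1) the transition maps $U_i(R) \cap U_j(R) \rightrightarrows U_i(R), U_j(R)$ are open embeddings, so the glued topology restricts back to the original one on each $U_i(R)$, and independence of the cover follows from the standard cofinality argument (any two covers are refined by a common one, and stage~(1) guarantees refinement does not change the topology). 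Functoriality is checked locally: a morphism $X \to Y$ pulls an affine cover of $Y$ back to an open cover of $X$, which one refines to an affine cover, and continuity of $X(R)\to Y(R)$ reduces to the affine case of Theorem~\ref{thm:topratptsaffine}; open and closed immersions go to open and closed embeddings by the same local reduction (using stage~(1) for the open case and the affine closed-embedding statement for the closed case). Compatibility with fibred products and the normalization $\Ad^1_R(R)=R$ are inherited from Theorem~\ref{thm:topratptsaffine} via an affine cover of the base; uniqueness is forced because any functor with the listed properties must restrict to the one of Theorem~\ref{thm:topratptsaffine} on affines and must glue in the only way compatible with the open-embedding condition.

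For stage~(4), suppose $R$ is Hausdorff and $X \to \spec R$ is separated. Then the diagonal $\Delta : X \to X \times_R X$ is a closed immersion, so on $R$-points $\Delta(R) : X(R) \to (X\times_R X)(R) = X(R) \times X(R)$ is a closed embedding by the functor's properties; since $X(R)\times X(R)$ taken in $\Seq$ is Hausdorff whenever $X(R)$ is — no, rather: the diagonal of $X(R)$ is the preimage of the closed set $\Delta(R)(X(R))$ under the (continuous) map $X(R)\times X(R) \to (X\times_R X)(R)$, hence closed, which is exactly Hausdorffness of $X(R)$. (Here one uses that the natural map from the $\Seq$-product to $(X\times_R X)(R)$ is a homeomorphism by the fibred-product compatibility.)

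\textbf{Main obstacle.} The crux is stage~(1): showing $U(R) \hookrightarrow X(R)$ is an \emph{open} embedding for every affine open immersion. Everything else is bookkeeping once this is in hand, but this step is precisely where all three hypotheses on $R$ (locality, openness of $R^\times$, sequential continuity of inversion) are consumed, and where the sequential saturation in Definition~\ref{df:toponrationalpoints} matters — without saturation the graph-of-inverse trick only gives a sequentially continuous section, not a continuous one, so the embedding could fail. Care is also needed that "open embedding" is stable under the refinement of covers and that it is genuinely a \emph{topological} property surviving sequential saturation, which it does because open and closed subspaces and open images are closed constructions in $\Seq$ (cited from Franklin earlier in the text).
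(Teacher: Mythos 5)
Your construction is essentially the paper's: the whole weight is placed on showing that an affine open immersion induces an open embedding on $R$-points, reduced to the principal open case, and your graph-of-inverse factorisation through $\spec\!\left(A[s]/(sf-1)\right)$ is exactly the paper's fibre square $U = X \times_{\Ad^1_R} \mathbb{G}_m$ written out explicitly; both consume the three hypotheses on $R$ in the same places, and the locality argument for $X(R) = \bigcup_i U_i(R)$ is identical. Your stage~(3) spells out the gluing, cover-independence and functoriality that the paper leaves implicit; that bookkeeping is correct.

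The one place where you go beyond what the paper actually proves is stage~(4), and there your argument has a genuine gap. The paper asserts the Hausdorff claim for separated $X$ without proof; your diagonal argument shows that the diagonal of $X(R)$ is closed in the product taken in $\Seq$, i.e.\ in the \emph{sequential saturation} of the product topology. That is the same as the diagonal being sequentially closed in the ordinary product, which says only that sequences in $X(R)$ have unique limits --- strictly weaker than Hausdorffness. Your parenthetical "which is exactly Hausdorffness" is where the argument breaks: Hausdorffness is closedness of the diagonal in the \emph{ordinary} product topology, and a set closed in the finer saturated topology need not be closed in the coarser one; equivalently, an open neighbourhood of $(x,y)$ in the $\Seq$-product avoiding the diagonal need not contain an open box $O_1 \times O_2$, so you cannot extract disjoint neighbourhoods of $x$ and $y$. (Conrad's version of this argument, which the paper cites, works because for a topological ring the relevant product is the ordinary one.) Note also that the affine case is genuinely different and unproblematic: there $X(R)$ carries a topology finer than the subspace topology from the Hausdorff space $R^A$, so it is Hausdorff outright. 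To close the non-affine case you would need an additional input --- for instance that the $\Seq$-product and $\Top$-product topologies agree on the relevant spaces, or a direct separation argument using the affine cover --- neither of which is supplied by the hypotheses as stated.
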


\begin{proof}
  We remark that under the hypotheses on $R$, the inclusion $R^\times \hookrightarrow R$ is an open embedding in $\Seq$.

  The key to the argument is showing that whenever $U \rightarrow X$ is an open immersion of affine schemes of finite type over $R$, then $U(R) \hookrightarrow X(R)$ is an open embedding.
  It is enough to show this in the case in which $U$ is a principal open set, so $X = \spec A$, $U = \spec A_f$, for some $f \in A$. Then, $U(R)$ is the preimage of $R^\times$ under the map $X(R) \rightarrow R$ associated to $f$. The fibre square
  \begin{equation}
    \label{eqn:topratptsgeneral}
    \xymatrix{
    U \ar[r]^f \ar[d] & \mathbb{G}_m \ar[d] \\
    X \ar[r] & \Ad_R^1    
    }
  \end{equation}
  reduces the problem to the special case $U = \mathbb{G}_m$, $X = \Ad_R^1$, because all schemes involved in (\ref{eqn:topratptsgeneral}) are affine and, in such case, we already have compatibility with fibred products. But, after saturating, $R^\times \rightarrow R$ is an open embedding by hypothesis.

  If $X = \cup_i U_i$ is an open cover, then $X(R) = \cup_i U_i\left( R \right)$, because a map $\spec R \rightarrow X$ that carries the closed point into $U_i$ must land entirely inside $U_i$, since the only open subscheme of $\spec R$ that contains the closed point is the entire space.
\end{proof}

\begin{rmk}
  Let $R$ be a sequential ring and $G \rightarrow \spec R$ a finite type group scheme. The structure morphisms
  \begin{align*}
    \mu: G \times_R G \longrightarrow G, \\
    \varepsilon: \spec R \longrightarrow G, \\
    \iota: G \longrightarrow G,
  \end{align*}
  together with their properties, turn the set $G(R)$ into a group by taking $R$-points. Since the topology we have constructed is functorial, $G(R)$ becomes a topological group.
\end{rmk}

\begin{rmk}
  Let $F$ be a higher dimensional local field equipped with a higher topology. On one hand, as any higher topology is Hausdorff, $F^\times$ is open in $F$, as it is the complement of a singleton. On the other hand, inversion is sequentially continuous. Finally, a field is a local ring. It is because of this that $F$ satisfies the hypotheses of Theorem \ref{thm:topratptsgeneral}.
\end{rmk}

\begin{rmk}
  Since the residue map $\rho: \OO_F \rightarrow \overline{F}$ is continuous and the topology on $\overline{F}$ is Hausdorff, the maximal ideal $\pp_F = \rho^{-1}(\left\{ 0 \right\})$ is a closed set. Since $\OO_F$ is local, $\OO_F^\times = \OO_F \setminus \pp_F$ is open in $\OO_F$. Inversion on $\OO_F^\times$ is sequentially continuous, and this means that Theorem \ref{thm:topratptsgeneral} may also be applied to the ring $\OO_F$.
\end{rmk}

\begin{rmk}
  By iteration of the argument in the previous remark, Theorem \ref{thm:topratptsgeneral} may also be applied to $O_F$ and to any of the higher rank valuation rings of $F$, displayed in (\ref{eqn:higherranksubrings}).
\end{rmk}

\begin{rmk}
  For the reasons exposed above, the results in this section also apply to archimedean higher local fields, higher fields over perfect fields and their higher rank rings of integers.
\end{rmk}

Regarding the behaviour of this topology under base change, Proposition \ref{prop:basechange} holds if we remove the affineness condition on $X$. The reason is that in order to check the conditions stated, we may restrict ourselves to an affine open subscheme, for which Proposition \ref{prop:basechange} is valid. For the sake of completeness, we state this result as a proposition.

\begin{prop}
  \label{prop:basechangegnrl}
  Let $R \rightarrow S$ be a morphism of sequential rings with $R$ and $S$ satisfying the hypotheses of Theorem \ref{thm:topratptsgeneral}. Let $X \rightarrow \spec R$ be a scheme of finite type. Then, the map $X(R) \rightarrow X(S)$ is continuous. Moreover, if $R \rightarrow S$ is an open (resp. closed) immersion, then $X(R) \rightarrow X(S)$ is also an open (resp. closed) immersion.
\end{prop}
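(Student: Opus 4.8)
The plan is to bootstrap the statement from its affine version, Proposition~\ref{prop:basechange}, by means of an affine open cover, in exactly the way Theorem~\ref{thm:topratptsgeneral} was obtained from Theorem~\ref{thm:topratptsaffine}. First I would fix an affine open cover $X=\bigcup_i U_i$ and record that base change is compatible with it: $X_S:=X\times_R S$ is covered by the affine opens $(U_i)_S:=U_i\times_R S$, and under the identifications $X(S)=X_S(S)$, $U_i(S)=(U_i)_S(S)$ the canonical map $\phi\colon X(R)\to X(S)$ sends the open subset $U_i(R)\subseteq X(R)$ into $U_i(S)$, where it restricts to the affine base-change map of Proposition~\ref{prop:basechange} applied to $U_i$. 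By Theorem~\ref{thm:topratptsgeneral} the topology on $X(R)$ (resp.\ $X(S)$) is glued from the $U_i(R)$ (resp.\ from the open embeddings $U_i(S)\hookrightarrow X(S)$), and the $U_i(R)$ genuinely cover $X(R)$ since $R$ is local, so every $\spec R\to X$ factors through the $U_i$ containing the image of the closed point.

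Continuity of $\phi$ is then routine and local on the source: $\phi|_{U_i(R)}$ is the composite of the continuous map $U_i(R)\to U_i(S)$ supplied by Proposition~\ref{prop:basechange} with the open inclusion $U_i(S)\hookrightarrow X(S)$, hence is continuous, and the $U_i(R)$ form an open cover. So $\phi$ is continuous.

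For the final clause, when $R\to S$ is an open immersion, Proposition~\ref{prop:basechange} makes each $U_i(R)\to U_i(S)$ an open embedding, so after composing with $U_i(S)\hookrightarrow X(S)$ the map $\phi$ restricts to an open embedding on each member of an open cover of its source; since being an open map is local on the source, I would conclude that $\phi$ is an open map onto the open set $\bigcup_i\phi(U_i(R))$, and the closed case would be treated the same way, using quasi-compactness of $X$ to replace the cover by a finite one. The step I expect to be the real obstacle is promoting ``$\phi$ is locally on the source an embedding'' to ``$\phi$ is a homeomorphic embedding'', i.e.\ establishing injectivity of $\phi$: for $X$ separated this should go through as in the Hausdorff clauses of Theorems~\ref{thm:topratptsaffine} and~\ref{thm:topratptsgeneral} (two $R$-points with the same image must land in a common affine open), but in the non-separated case it can fail---for the affine line with doubled origin over $R=\OO_F\hookrightarrow F=S$ the map $\phi$ is not injective---so I would either add the hypothesis that $X$ is separated or weaken ``immersion'' to ``open/closed map''. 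Granting injectivity, the local homeomorphisms patch to the asserted open (resp.\ closed) embedding.
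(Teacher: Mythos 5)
Your strategy is exactly the paper's: the entire justification given there is the sentence preceding the proposition, namely that one may restrict to an affine open subscheme and invoke Proposition~\ref{prop:basechange}, and your gluing argument via an affine cover $X=\bigcup_i U_i$ --- using that the $U_i(R)$ cover $X(R)$ because $R$ is local, and that the $U_i(S)\hookrightarrow X(S)$ are open embeddings by Theorem~\ref{thm:topratptsgeneral} over $S$ --- is the correct way to carry that sentence out. The continuity part of your argument is complete and is what the paper intends.

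The reservation you raise about the final clause is not excess caution: it is a genuine defect in the proposition as stated, which the paper's one-line argument silently skips. Your counterexample already works in dimension one: take $R=\mathbb{Z}_p\hookrightarrow S=\mathbb{Q}_p$ (an open embedding of topological, hence sequential, rings satisfying every hypothesis of Theorem~\ref{thm:topratptsgeneral}) and let $X$ be the affine line with doubled origin. Since $R$ is local, $X(R)=R\sqcup_{R^\times}R$ carries two distinct points above $p\in R\setminus R^\times$, and both map to the single point $p\in S^\times\subset X(S)=S\sqcup_{S^\times}S$; so $X(R)\to X(S)$ is not injective, hence not an immersion of any kind. The statement therefore needs $X$ separated (then injectivity does hold: the equalizer of two $R$-points agreeing after the schematically dominant morphism $\spec S\to\spec R$ is a closed subscheme containing the scheme-theoretic image of $\spec S$, hence all of $\spec R$), or else the conclusion must be weakened to an open (resp.\ closed) map, as you propose. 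One last caveat on your closed case: closedness is not local on the source even for finite covers, so besides quasi-compactness you must check that $\phi(X(R))\cap U_i(S)=\phi(U_i(R))$, i.e.\ that an $R$-point whose base change factors through $(U_i)_S$ already factors through $U_i$; this holds when the image of $\spec S\to\spec R$ contains the closed point of $\spec R$, which is automatic for ring surjections but should be verified for whatever notion of closed immersion of sequential rings is intended.
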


\section{Future work}
\label{sec:future}

\paragraph{Reduction maps.} These are a particular case of base change. For a higher local field $F$ of dimension $n \geq 1$ endowed with a higher topology, $\overline{F}$ also is endowed with a higher topology (if $\overline{F}$ is finite, we consider the discrete topology), and the topologies on $F$ and $\overline{F}$ are compatible.

  If $X \to \spec \OO_F$ is of finite type, flat and irreducible, then the reduction map
  \begin{equation*}
    \rho: X(\OO_F) \rightarrow X_{\overline{F}}(\overline{F})
  \end{equation*}
  is surjective \cite[Prop. 10.1.36]{liu-algebraic-geometry}. Besides the map $\rho$ being continuous by Proposition \ref{prop:basechangegnrl}, one can say more in certain cases. For example, if $X = \Ad^1_{\OO_F}$, then $\rho$ is open: we are dealing with the map $\rho: \OO_F \to \overline{F}$ with respect to the sequential topologies and, since these are group topologies, it is enough to check that the image of a sequentially open neighbourhood of zero in $\OO_F$ is sequentially open in $\overline{F}$. The obvious argument works because we are able to choose very particular lifts of elements of a sequence converging to zero in $\overline{F}$. The same argument could be adapted to the case $X=\Ad^n_{\OO_F}$.

  A general argument along these lines would not be possible: affine schemes may be viewed as closed embeddings into affine spaces and the restriction of an open map to a closed subspace is not necessarily open.
  
  It would be very interesting to know if the two topologies on either side of the map $\rho$ are related. More precisely: is $\rho$ open, perhaps under certain conditions on $X$? If not, is it at least a quotient mapping?

  \paragraph{Relation between topology and integration.} One of the goals of the higher adelic programme is to generalize the techniques used for the study of local fields to higher local fields. In this direction, Fesenko \cite{fesenko-aoas1} and Morrow \cite{morrow-integration-on-valuation-fields}, \cite{morrow-integration-on-product-spaces} have introduced and developed a two-dimensional measure and a theory of harmonic analysis on two-dimensional local fields, generalising the local results of \cite{tate-thesis}. The global counterpart to this theory has been developed by Fesenko in \cite{fesenko-aoas2}.

The extension of these methods to sets of rational points over higher local fields and the development of a new integration theory on such spaces would be a very important achievement in the direction of a better understanding of the arithmetic of higher dimensional schemes.

More precisely, harmonic analysis on rational points over higher local fields would be very helpful for the study of sets of rational points over higher adelic rings and establishment of a theory of higher dimensional Tamagawa numbers.

It seems that a good understanding of the topology of these spaces, as well as the relation between topology and two-dimensional measure, could be an important contribution in this direction.

\paragraph{Points over higher adelic rings.} There are several higher adelic objects which may be defined as a restricted product of higher local fields.

In the work of Fesenko in dimension two \cite{fesenko-aoas2} there are several rings that may be realised as a restricted product of higher local fields and rings. 

In Beilinson's general simplicial approach to higher adeles on noetherian schemes \cite{huber-parshin-beilinson-adeles}, some of the adelic objects described are also definable in terms of restricted products of higher local fields.

It seems that an approach to endowing sets of rational points over such higher adelic rings with topologies would require the use of the results in this work as a starting point.

\paragraph{Representation theory of algebraic groups over two-dimensional local fields.} Such representation theory has been developed by Gaitsgory-Kazhdan \cite{gaitsgory-kazhdan-reps-alg-grps-over-2ldf} and Braverman-Kazhdan \cite{braverman-kazhdan-examples-hecke-algs-for-2ldfs} among other works.

Algebraic groups over two dimensional local fields and their central extensions are a generalization of formal loop groups and are related to a generalization of a class of affine Kac-Moody groups. Hence the results in this work can find applications in the corresponding representation theory.

\vskip .5cm

\def\cprime{$'$} \def\cprime{$'$}

\end{document}